\theoremstyle{plain}
\newtheorem{theorem}{Theorem}[section]
\newtheorem{lemma}[theorem]{Lemma}
\newtheorem{proposition}[theorem]{Proposition}
\newtheorem{corollary}[theorem]{Corollary}
\newtheorem{Counter-example}[theorem]{Counter-example}
\newtheorem{remark}[theorem]{Remark}
\newtheorem{example}[theorem]{Example}
\theoremstyle{definition}
\theoremstyle{remark}
\long\def\symbolfootnote[#1]#2{\begingroup\def\thefootnote{\fnsymbol{footnote}}
\footnote[#1]{#2}\endgroup}
\begin{document}

\def\Q{\mathbb Q}
\def\R{\mathbb R}
\def\N{\mathbb N}
\def\Z{\mathbb Z}
\def\C{\mathbb C}
\def\S{\mathbb S}
\def\L{\mathbb L}
\def\H{\mathbb H}
\def\K{\mathbb K}
\def\X{\mathbb X}
\def\Y{\mathbb Y}
\def\Z{\mathbb Z}
\def\E{\mathbb E}
\def\J{\mathbb J}
\def\I{\mathbb I}
\def\T{\mathbb T}
\def\H{\mathbb H}

\title[Codimension two spacelike submanifolds]{
Spacelike Submanifolds of Codimension Two with Parallel Mean Curvature Vector Field in\\ Lorentz-Minkowski Spacetime Contained\\ in the Light Cone
}

\author{Francisco J. Palomo and Alfonso Romero}

\address{F.J. Palomo, Departamento de Matem\'{a}tica Aplicada, 
Universidad de M\'{a}laga,  29071-M\'{a}laga (Spain)}
\email{fpalomo@uma.es}

\address{A. Romero,
Departamento de Geometr\'{i}a y Topolog\'{i}a, Universidad de Granada, 18071-Granada (Spain)}
\email{aromero@ugr.es}

\date{}

\begin{abstract}
A general integral inequality is established for compact spacelike submanifolds of codimension two in the Lorentz-Minkowski spacetime under the assumption that the mean curvature vector field is parallel.  This inequality is then used to derive a rigidity result. Specifically,  
we obtain a complete characterization of all compact spacelike submanifolds with parallel mean curvature vector field that lie in the light cone of the Lorentz-Minkowski spacetime: they must be totally umbilical spheres contained in a spacelike hyperplane in Lorentz-Minkowski spacetime.  Our approach unifies and extends previous partial results, providing a dimension-independent technique that avoids additional restrictive assumptions.
Furthermore, when the spacelike submanifold with parallel mean curvature vector field is represented as a graph over the sphere $\S^n$ in the Lorentz-Minkowski spacetime $\mathbb{L}^{n+2}$ and lies in the light cone, the condition of parallel mean curvature vector field translates into an elliptic partial differential equation on $\S^n$, previously studied by M. Obata. As a consequence of our uniqueness result, we explicitly describe all solutions to Obata’s equation, thereby obtaining a new proof of his classical result.

\end{abstract}

\maketitle

\symbolfootnote[ 0 ]{Both authors were partially supported by the Spanish MICINN and ERDF project PID2024-156031NB-I00, and the second one was also partially supported by the ``Mar\'{\i}a de Maeztu'' Excellence Unit IMAG, reference CEX2020-001105-M, funded by MCIN-AEI-10.13039-501100011033.
}

\thispagestyle{empty}

\noindent {\bf 2020 MSC:} Primary 53C40, 53C42, 58E15, 58J05. Secondary 53C24, 53C18, 53B20.\\
{\bf Keywords:} Compact spacelike submanifolds; parallel mean curvature vector field; Lorentz-Minkowski spacetime; Light cone; pointwise conformal metrics on the $n$-dimensional round sphere; constant scalar curvature, parallel second fundamental form. 

\section{Introduction}
\noindent The notion of a parallel mean curvature vector field is the natural extension of the concept of constant mean curvature for hypersurfaces to the case of submanifolds with codimension $\geq 2$, \cite{Hoffman}, \cite{Chen}, \cite{Y}. The classification of complete submanifolds with parallel mean curvature vector field in space forms is a classical problem that has attracted considerable attention, as noted in \cite{Santos}. The standard approach, as described in \cite{Santos} and the references therein, relies on a Simons-type formula for the Laplacian of an appropriately chosen function with extrinsic geometric significance. This method has been generalized to include spacelike submanifolds in indefinite space forms, under the assumption that the normal bundle is negatively definite (see, for instance, \cite{Cheng}).

\vspace{1mm}

Submanifolds with parallel mean curvature vector field also admit a variational characterization. Specifically, a compact submanifold $M^n$ in Euclidean space $\mathbb{E}^m$ with $n+1\leq m$, has parallel mean curvature vector field if and only if it is a critical point of the $n$-dimensional area functional under volume-preserving variations. Similarly, a compact spacelike submanifold $M^n$ in Lorentz-Minkowski spacetime $\mathbb{L}^m$ with $n+2\leq m$ (since no compact spacelike hypersurface exists in $\L^m$), has parallel mean curvature vector field if and only if it is a critical point of the $n$-dimensional area functional under volume-preserving variations that also preserve the spacelike condition.

\vspace{1mm}

In the case of a spacelike immersion $\psi : M^n \rightarrow \mathbb{L}^{n+2}$ such that $\psi(M^n)$ is contained in the (future) light cone 
$\Lambda^{n+1}_+ = \{\,v\in \L^{n+2}\,:\,\langle v,v \rangle =0,\, v_{0}>0\,\},$
the immersion $\psi$ has parallel mean curvature vector field $\mathbf{H}$ if and only if $\|\mathbf{H}\|^2$ is constant (Proposition~\ref{equivalence2}).
We point out that the light cone $\Lambda^{n+1}_+$ is a degenerate hypersurface in $\L^{n+2}$; however, it plays a significant role in conformal Riemannian geometry. Specifically, a simply connected Riemannian manifold $M^n$ of dimension $n(\geq 3)$ is locally conformally flat if and only if it admits an isometric immersion in $\Lambda^{n+1}_+$ \cite{Bri} (revisited in \cite{AD} in a modern approach). This fact is one of the motivations to study spacelike submanifolds in $\Lambda^{n+1}_+ \subset \mathbb{L}^{n+2}$ (see, for instance, \cite{ACR}, \cite{IPR}).

\vspace{1mm}

In general, for a spacelike immersion $\psi: M^n \rightarrow \mathbb{L}^{n+2}$ such that $\psi(M^n)\subset \Lambda_+^{n+1}$, if $M^n$ is assumed to be compact, it must be diffeomorphic to the $n$-dimensional sphere $\mathbb{S}^n$ and $\psi$ is an embedding \cite[Prop. 5.1]{PPR}. Once $M^n$ is identified with $\mathbb{S}^n$, up to a diffeomorphism, see Proposition \ref{010325A}, the spacelike embedding $\psi : M^n \rightarrow \Lambda_+^{n+1} \subset \mathbb{L}^{n+2}$ is completely described by a smooth function $f : \mathbb{S}^n \rightarrow \mathbb{R}$ as $$\psi(x)=\psi_f(x)=e^{f(x)}(1,x),$$for all $x\in \mathbb{S}^n$. We will call $\Sigma^n_f:=\psi_f(\mathbb{S}^n)$ the spacelike graph over $\mathbb{S}^n$ in $\Lambda_+^n$ defined by $f$ in Section 4. The metric $g_f$ on $\mathbb{S}^n$ induced from $\mathbb{L}^{n+2}$ via $\psi_f$ satisfies $$g_f=e^{2f}g_0,$$ where $g_0$ is the round metric of constant sectional curvature $1$, (see Section 4). Therefore, both $n$-dimensional area functional and the volume constraint are naturally expressed in terms of the function $f$. Moreover, by means of Proposition \ref{equivalence2} and Remark \ref{110725A}, the following assertions for a spacelike immersion $\psi_f$ are equivalent:

\begin{quote}
    
{\it 
\begin{enumerate}
\item The mean curvature vector field $\mathbf{H}_f$ of $\psi_f$ is parallel.
\vspace*{1mm}
\item  $\|\mathbf{H}_f\|^2$ is a positive constant $k$.
\vspace*{1mm}
\item The scalar curvature of the induced metric $g_f$ is the constant $n(n-1)k$.

\item The function $f$ satisfies the following elliptic PDE 
\[
(\mathrm{E}) \hspace*{10mm} 2\Delta^{0}f+(n-2)\,\|\nabla^{0}f\|^{2}_{0}=n\,(1- k\,e^{2f}),\hspace*{20mm} 
\]
\noindent where $\Delta^{0}$, $\nabla^{0}$ and $\| \; \|_{0}^2$ denote the Laplacian, the gradient and the square of the norm for the metric $g_0$, respectively.
\end{enumerate}
}
\end{quote}

Equation (E) can thus be interpreted in two distinct ways. On one hand, it arises as the Euler–Lagrange equation of an extrinsic variational problem (see Section 4). On the other hand, it also has an intrinsic geometric interpretation: each solution $f$ of equation (E) determines a Riemannian metric, pointwise conformal to the standard round metric $g_0$ on $S^n$, with constant scalar curvature.
At this point, we recall the following classical result by Obata, stated in \cite[Prop. 6.1]{Ob71}.

\vspace{1mm}

\begin{quote}{\it 
Any pointwise conformal change of the usual round metric on a sphere $\S^n$ with constant scalar curvature is of constant sectional curvature.}
\end{quote}

\vspace{1mm}

\noindent We provide an alternative proof of this fact as a consequence of Theorem \ref{1106A}, 
Specifically, we prove the following uniqueness result, Corollary \ref{ultimo},

\vspace{1mm}

\begin{quote}{\it 
For each positive constant $k$, the only solutions of equation {\rm (E)} are the functions $f: \S^n \rightarrow \R$, $n \geq 2$, given by
\[
f(x)=\log\Big(\,\frac{1/\sqrt{k}}{-u_0+\langle u,x\rangle}\,\Big), \quad x\in \S^n,
\]
where $\langle u,x\rangle=\sum_{i=1}^{n+1}u_{i}x_{i}$, $u_0<0$ and $-u_0^2+\langle u,u\rangle=-1$. 
}
\end{quote}

\vspace{2mm}

\noindent In fact, this result not only allows us to reprove Obata’s result \cite[Prop. 6.1]{Ob71}, as stated in Corollary \ref{Obata}, but also explicitly provides all the conformal changes of metric involved.
Moreover, several remarks comparing Obata’s original proof with ours are given in Remark \ref{250225A}.
 
\vspace{1mm}

Now, we want to make explicit our main geometric result, Theorem \ref{1106A}, and the technique used in this paper. First of all, some partial results were already known in the study of spacelike submanifolds with parallel mean curvature vector field in $\mathbb{L}^n$.
For instance, if a spacelike surface $M^2$ with parallel mean curvature vector field in $\mathbb{L}^4$ has the topology of a 2-sphere, then it must be a totally umbilical sphere contained in a spacelike hyperplane of $\mathbb{L}^4$ \cite[Cor. 4.5]{AER}. This was derived from a general integral formula \cite[Th. 3.1]{AER}, which applies under the assumption that the compact spacelike submanifold is pseudo-umbilical.
In the 2-dimensional case, a Hopf-type quadratic differential can be defined on $M^2$, which vanishes identically due to the topological assumption \cite[Lemma 3.3]{AER}.
Although this implies that any compact spacelike surface $M^2$ with parallel mean curvature vector field in $\mathbb{L}^4$ lying in the light cone $\Lambda_{+}^3$ must be totally umbilical, a direct proof of this fact was later provided in \cite[Th. 5.4]{PaRo}, using a technique tailored to this specific setting, particularly, the Gauss-Bonnet theorem for $\mathbb{S}^2$.
Subsequently, it was shown in \cite[Cor. 5.4]{PPR} that a compact spacelike submanifold $M^4$ in $\mathbb{L}^6$ lying in the light cone $\Lambda_+^5$, with parallel mean curvature vector field and satisfying a certain lower bound on the scalar curvature, must also be totally umbilical. The proof again relied on dimension-specific methods, notably the Gauss-Bonnet-Chern-Avez theorem in dimension $4$, \cite{Avez}.
These examples illustrate that previous techniques depend heavily on the dimension and specific geometric assumptions. Therefore, a more general approach is needed (see also \cite{Chen1}).
Thus, an integral formula was developed in \cite{AER} that is useful for compact pseudo-umbilical spacelike submanifolds $M^n$ of $\mathbb{L}^{n+2}$. However, as shown in Proposition \ref{equivalence1}, when $\psi(M^n) \subset \Lambda_+^{n+1}$, the immersion $\psi$ is pseudo-umbilical if and only if it is totally umbilical.
Thus, a new technique that is particularly well-suited to our setting, i.e., independent of both the dimension of the spacelike submanifold and additional assumptions, is required. This has now been achieved in this paper: the integral formula obtained in Proposition \ref{2605C}. As a consequence, we prove a general integral inequality in Proposition \ref{040125B} that characterizes geometrically the equality case:
\begin{quote}{\it 
     Let $\psi: M^{n}\rightarrow \L^{n+2}$ be a compact spacelike submanifold with $\nabla^{\perp} \mathbf{H}=0$.
  Then, for every $a\in \L^{n+2}$, timelike with $\langle a,\xi_{0}\rangle >0$, the following inequality holds
$$
    \int_{M^n}\langle a, \xi_{0}-\xi_{1}\rangle \Big(n(n-1)\langle \mathbf{H},\mathbf{H} \rangle- S\Big)\,dV_g \, \geq 0\,,
$$
where $\xi_{0}, \xi_{1}\in \mathfrak{X}^{\perp}(M^n)$ are taken as in Corollary {\rm \ref{coro1}}.
Moreover, the equality holds if and only if  $M^n$ is a totally umbilical sphere with constant sectional curvature $\langle \mathbf{H},\mathbf{H} \rangle >0$, in a spacelike hyperplane of $\L^{n+2}$. }
\end{quote}
\vspace{1mm}
\noindent After several consequences of this result, the main result, Theorem \ref{1106A}
solves the geometric version of the previously quoted variational problem:

\vspace{1mm}

\begin{quote}{\it 
    Every compact spacelike submanifold $\psi: M^{n}\rightarrow \Lambda_+^{n+1}\subset \L^{n+2}$, with parallel mean curvature vector field, is a totally umbilical round sphere $\S^n(v,r):=\{x\in \L^{n+2}\, :\, \langle x,x\rangle = 0, \langle v,x\rangle = r\}$, where $v=(v_0,...,v_{n+1})\in \L^{n+2}$ with $\langle v,v\rangle = -1$, $v_0<0$ and $1/r^2=\langle \mathbf{H},\mathbf{H} \rangle$, $r>0$, is its $($constant$)$ sectional curvature. }
\end{quote}

\vspace{1mm}

\hyphenation{Lo-rent-zi-an}

Finally, let us outline the remainder of the content of the paper. Section 2 reviews basic concepts and establishes the notation and conventions used throughout the article. At the same time, we highlight several notable facts concerning spacelike submanifolds of codimension two with parallel mean curvature vector field in the Lorentz-Minkowski spacetime. Particularly relevant for our purposes is Corollary \ref{coro1}, which provides two lightlike normal vector fields adapted to the study of compact spacelike submanifolds with parallel mean curvature vector field.
Section 3 recalls key properties of codimension two spacelike submanifolds lying in the light cone, with the main references being \cite{PaRo} and \cite{PPR}. In particular, Propositions \ref{equivalence1} and \ref{equivalence2} characterize, respectively, the conditions for total umbilicity and for having parallel mean curvature vector field in the context of spacelike submanifolds through the light cone.
Remark \ref{examples} provides an explicit description of totally umbilical compact $n$-dimensional spacelike submanifolds lying in the light cone as the $n$-spheres previously quoted $\S^n(v,r)$.

\vspace{1mm}

Section 4 focuses on the study of spacelike graphs $\Sigma^n_f$ over $\mathbb{S}^n$ lying in the light cone. In this case, the squared length of the corresponding mean curvature vector field depends only of the function $f$, and it is explicitly given in formula (\ref{0403A}). This leads to the key equation (E) for spacelike graphs with parallel mean curvature vector field, as well as to a variational interpretation of the function $f$ involved.
\noindent It should be remarked that, according to Proposition \ref{equivalence2}, the assumption on the mean curvature vector field can be replaced by any of the following equivalent conditions:
\begin{enumerate}{\it 
    \item The mean curvature vector field $\mathbf{H}_f$ satisfies $\langle \mathbf{H}_f,\mathbf{H}_f \rangle =$constant, 
\item The scalar curvature $S_f$ of the induced metric is constant.
\item The squared length $\langle\Pi_f, \Pi_f \rangle$ of the second fundamental form, $\Pi_f$, is constant.
\item The normalized mean curvature vector field $\big(1/\sqrt{|\langle \mathbf{H}_f,\mathbf{H}_f \rangle|}\, \big)\,\mathbf{H}_f$ is parallel.
}
\end{enumerate}

\vspace{1mm}

To end this Introduction, note that Theorem \ref{1106A} does not hold if the compactness assumption is weakened to completeness, as Counter-example \ref{counter-example}\textbf{(a)} shows. Moreover, as a direct application of Theorem \ref{1106A}, Corollary \ref{equivalence3} improves Proposition \ref{equivalence2} by adding the equivalence between the parallelism of the mean curvature vector field and the parallelism of the second fundamental form for compact spacelike submanifolds $M^n$ in $\mathbb{L}^{n+2}$ through the light cone. However, Counterexample \ref{counter-example}\textbf{(b)} provides an example showing that this equivalence is not true if the spacelike submanifold is assumed to be complete and non-compact.

\section{Preliminaries}
 
\noindent Let $\L^{n+2}$ be the $(n+2)$-dimensional Lorentz-Minkowski spacetime, that is, $\L^{n+2}$ is $\R^{n+2}$ endowed
with the Lorentzian metric 
$
\langle\, ,\, \rangle =-dx_{0}^2+
\sum\limits_{\scriptscriptstyle i=1}^{\scriptscriptstyle n+1}dx_{i}^{2}
$ in canonical coordinates. An immersion $\psi:M^n\rightarrow \L^{n+2}$ of an $n(\geq 2)$ dimensional (connected and orientable) manifold $M^n$ in $\L^{n+2}$ is said to be
spacelike if the induced metric $M^n$ by $\psi$ (denoted also by the same symbol
$\langle\; ,\; \rangle$) is Riemannian. In this
case, $M^n$ is also called a spacelike submanifold of $\L^{n+2}$. This paper will be interested in the case $M^n$ is compact. Therefore, the codimension must be at least $2$ (see \cite{Harris} for instance).

\vspace{1mm}

Let $\overline{\nabla}$ and $\nabla$ be the Levi-Civita
connections of $\L^{n+2}$ and $M^n$, respectively, and let
$\nabla^{\perp}$ be the normal bundle connection. The basic relations  between these connections are the well-known Gauss and Weingarten formulas 
\begin{equation}\label{Gauss_Weingarten}
\overline{\nabla}_X Y=\nabla_XY+ \mathrm{II}(X,Y)
\, \quad \mathrm{and} \, \quad
\overline{\nabla}_X\xi=-A_{\xi}X+\nabla^{\perp}_X\,\xi,
\end{equation}
for any $X,Y \in \mathfrak{X}(M^{n})$ and $\xi \in
\mathfrak{X}^{\perp}(M^{n})$. Here $\mathrm{II}$ denotes the
second fundamental form of the spacelike submanifold and $A_{\xi}$ the  Weingarten operator
corresponding to $\xi$. 
The shape operator $A_{\xi}$ is related to $\mathrm{II}$ by
\begin{equation}\label{relacion}
\langle A_{\xi}X, Y \rangle = \langle \mathrm{II}(X,Y), \xi
\rangle,
\end{equation}
for any $X,Y \in \mathfrak{X}(M^{n})$. 

\vspace{1mm}
 
 Since we assume $M^n$ to be orientable, we may take two globally defined, everywhere independent lightlike normal vector fields $\xi, \eta\in \mathfrak{X}^{\perp}(M^n)$ with $\langle \xi,\eta\rangle=1$. This assertion follows from the following argument: 

\vspace{1mm}

Any unit timelike vector $a\in \L^{n+2}$ can be decomposed into tangential and normal components 
\begin{equation}\label{decomposition_1}
a=a_x^{\top}+a_x^{\mathrm{N}},\quad \text{at}\; \text{each}\quad x\in M^n,    
\end{equation}
according to the orthogonal decomposition 
\begin{equation}\label{descomposicion_2}
\L^{n+2}= T_{x}M^n\oplus T_{x}^{\perp}M^n.
\end{equation}
Thus, we obtain $a^{\top}\in \mathfrak{X}(M^n)$ and 
$a^{\mathrm{N}} \in \mathfrak{X}^{\perp}(M^{n})$ that satisfy

\begin{equation}\label{modulos}
-1=\langle a,  a\rangle=\langle a^{\top},  a^{\top}\rangle+ \langle a^{\mathrm{N}},  a^{\mathrm{N}}\rangle,
\end{equation}
and hence  $\langle a^{N},  a^{N}\rangle=-1-\langle a^{\top},  a^{\top}\rangle\leq -1$, because $\langle a^{\top},  a^{\top}\rangle \geq 0$. Thus, 
\begin{equation}\label{normal_temporal_unitario}
a^{\psi}:=\frac{1}{{\sqrt{1+\|a^{\top}\|^{2}}}}\,a^{\mathrm{N}}
\end{equation}
is a unit timelike normal vector field globally defined on $M^n$.

\vspace{1mm}

Now, fix an orientation on $M^n$ and consider the usual positive orientation on $\L^{n+2}$. An orthonormal basis $(\alpha_{1}, \alpha_{2})$ of the Lorentzian plane $T^{\perp}_{x}M^n$ is said to be positively oriented if for a positively oriented orthonormal basis $(e_1,\dots, e_n)$ in $T_{x}M^n$ (and hence for any positively orthonormal basis) the orthonormal basis $(\alpha_1,\alpha_2,e_1,\dots, e_n)$ in $\L^{n+2}$ is positively oriented. Now consider $\alpha_1=a_x^{\psi}$ as above and define $Ja_x^{\psi}\in T^{\perp}_{x}M^n$ 
as the unique normal vector at this point that satisfies $\langle Ja_x^{\psi},  Ja_x^{\psi}\rangle=1$,  $\langle Ja_x^{\psi},  a_x^{\psi}\rangle=0$ with the orthonormal basis $(a_x^{\psi}, Ja_x^{\psi})$ in  $T_{x}^{\perp}M$ positively oriented. In this way, we have a globally defined unit spacelike normal vector field $Ja^{\psi}$. Finally, we can take 
\begin{equation}\label{291024A}
    \xi=\frac{1}{\sqrt{2}}(a^{\psi}+Ja^{\psi})\; \textrm{ and }\; \eta=\frac{1}{\sqrt{2}}(-a^{\psi}+Ja^{\psi}).
\end{equation}

\begin{remark}
{\rm 
If we change $\L^{n+2}$ to any $(n+2)$-dimensional orientable spacetime $\overline{M}^{n+2}$, then the same conclusion as above is obtained starting from a timelike vector field on $\overline{M}^{n+2}$. This technical result does not hold in general for codimension $\geq 3$.
}
\end{remark}

Let $\psi : M^n \rightarrow \L^{n+2}$ be a spacelike submanifold. Let us consider any pair $\xi, \eta \in \mathfrak{X}^{\perp}(M^n)$ such that $\langle \xi, \xi\rangle=\langle \eta, \eta\rangle =0$ and $\langle \xi, \eta\rangle =1$, everywhere on $M^n$. We can write 
the following (global) formula for the second fundamental form
\begin{equation}\label{segunda_forma_fundamental}
\mathrm{II}(X,Y)=\langle A_{\eta}X, Y \rangle\,\xi+\langle A_{\xi}X, Y \rangle\,\eta,
\end{equation}
for every $X,Y\in \mathfrak{X}(M^{n})$. 

\vspace{1mm}

The mean curvature vector field of the submanifold is defined by
$\mathbf{H}=(1/n)\,\mathrm{trace}_{_{\langle\; , \;
\rangle}}\mathrm{II}.$ Using (\ref{segunda_forma_fundamental}), we can express
\begin{equation}\label{curme}
\mathbf{H}=\frac{1}{n}\big(\,(\mathrm{trace}\,
A_{\eta})\,\xi+(\mathrm{trace}\, A_{\xi})\,\eta\,\big),
\end{equation}
and consequently 
\begin{equation}\label{squared_lenght_H}
\langle \mathbf{H},\mathbf{H} \rangle = \frac{2}{n^2}\,(\mathrm{trace}A_{\xi})(\mathrm{trace}A_{\eta}).
\end{equation} 

\begin{remark}\label{1908B} 
{\rm It should be pointed out that $\mathbf{H}$ does not have the same causal character everywhere on $M^n$, in general. However, if $M^n$ is assumed to be compact (and hence $\mathbf{H}\neq 0$), then it is impossible that $\mathbf{H}$ is timelike everywhere \cite[Rem. 4.2]{AER} or lightlike everywhere \cite{MS03}.   
}
\end{remark}

On the other hand, the Gauss equation $\psi$ is written 
 \begin{equation}\label{1}
R(X,Y)Z= A_{_{\mathrm{II}(Y,Z)}}X-A_{_{\mathrm{II}(X,Z)}}Y,
\end{equation}
where $R$ is the curvature tensor of $M^n$. By contraction, we obtain
\begin{equation}\label{3}
\mathrm{Ric}(Y,Z)=n\,\langle A_{\mathbf{H}}Y,Z\rangle-\big\langle
\,(A_{\xi}A_{\eta}+ A_{\eta}A_{\xi}\,)Y, Z \big\rangle,
\end{equation}
where Ric is the Ricci tensor of $M^n$.
Therefore, using (\ref{curme}) we get from (\ref{3})
\begin{equation}\label{101}
S=2\,(\mathrm{trace}A_{\xi})(\mathrm{trace}A_{\eta})-2\,\mathrm{trace}(A_{\xi}A_{\eta}),
\end{equation}
where $S$ is the scalar curvature of $M^{n}$.

\vspace{1mm}

Furthermore, the squared length of the second fundamental form is given by
$\langle
\mathrm{II},\mathrm{II}\rangle_{x}=\sum_{i,j=1}^{n}\langle
\mathrm{II}(e_{i},e_{j}),\mathrm{II}(e_{i},e_{j})\rangle,$
where $(e_{1},\dots,e_{n})$ is an orthonormal basis of $T_{x}M^{n}$, $x\in M^n$. Therefore, from (\ref{segunda_forma_fundamental}) we get 
\begin{equation}\label{squared_lenght_second_fundamental_form}
\langle
\mathrm{II},\mathrm{II}\rangle = 2\,\mathrm{trace}(A_{\xi}A_{\eta}).
\end{equation}
Thus, formula (\ref{101}) may be also written as follows    
\begin{equation}\label{101a}
S=n^2\,\langle
\mathbf{H},\mathbf{H} \rangle-\langle \mathrm{II},\mathrm{II}\rangle\,.
\end{equation}

Finally, we have the Codazzi equation

\begin{equation}\label{Codazzi_1}
(\widetilde{\nabla}_X\mathrm{II})(Y,Z)=(\widetilde{\nabla}_Y\mathrm{II})(X,Z),
\end{equation}
where
$$(\widetilde{\nabla}_X\mathrm{II})(Y,Z)=\nabla^{\perp}_X\,\mathrm{II}(Y,Z)-\mathrm{II}(\nabla_XY,Z)-\mathrm{II}(Y,\nabla_XZ),$$
for any $X,Y,Z \in \mathfrak{X}(M^n)$. The Codazzi equation is now equivalently written as
\begin{equation}\label{Codazzi_2}
(\nabla_{X}A_{\mu})Y-(\nabla_{Y}A_{\mu})X=
A_{\nabla^{\perp}_{X}\mu}Y - A_{\nabla^{\perp}_{Y}\mu}X,
\end{equation}
for any $\mu\in\mathfrak{X}^{\perp}(M^{n})$.

\vspace{1mm}

Taking into account the codimension two of the immersion $\psi$ and that $\nabla^{\perp}$ is metric, we have a well-defined one-form $\alpha$ on $M^n$ given by the equivalent conditions
\begin{equation}\label{one-form}
\nabla^{\perp}_{X}\xi = \alpha(X)\xi, \qquad \nabla^{\perp}_{X}\eta= -\alpha(X)\eta,
\end{equation}
for any $X\in \mathfrak{X}(M)$.
Explicitly we have $\alpha(X)=\langle\nabla^{\perp}_{X}\xi ,\eta \rangle$. 
Hence,
\begin{equation}\label{normal_curvature}
d\alpha(X,Y)=\langle R^{\perp}(X,Y)\xi,\eta\rangle\,,
\end{equation}
for any $X,Y\in\mathfrak{X}(M^n)$, where $R^{\perp}$ is the normal curvature tensor of the spacelike submanifold. 
\begin{remark}\label{cambios} {\rm Obviously, the one-form $\alpha$ depends on the lightlike normal reference $(\xi,\eta)$ chosen. If for $f\in C^{\infty}(M^n)$, never vanishing, we consider $(\bar{\xi},\bar{\eta})$ defined by $\bar{\xi}=(1/f)\xi$, $\bar{\eta}=f\eta$, then the corresponding one-forms are related by $\bar{\alpha}=\alpha - d \log |f|$, hence $d\bar{\alpha}=d\alpha$, which can be also deduced from (\ref{normal_curvature}).
}
\end{remark}

\vspace{1mm}

Note that (\ref{Codazzi_2}) gives, taking into account (\ref{one-form}), the following equation 
\begin{equation}\label{Codazzi_3}
(\nabla_{X}A_{\eta})Y-(\nabla_{Y}A_{\eta}) X= -\alpha(X)
A_{\eta}Y + \alpha(Y) A_{\eta}X.
\end{equation}
On the other hand, from (\ref{curme}) we have that $\nabla^{\perp}\mathbf{H}=0$ is equivalent to
\begin{equation}\label{1006A}
\nabla  \mathrm{trace}(A_{\xi})=\mathrm{trace}(A_{\xi})\,\alpha^{\sharp} \quad \text{ and }\quad \nabla \mathrm{trace}(A_{\eta})=-\mathrm{trace}(A_{\eta})\, \alpha^{\sharp},
\end{equation}
where $\alpha^{\sharp}\in \mathfrak{X}(M^n)$ is given by $\langle \alpha^{\sharp}, X\rangle= \alpha(X)$ for all $X\in  \mathfrak{X}(M^n).$
Now, as a direct consequence of formulas (\ref{1006A}), we obtain
\begin{lemma}\label{lema1}
Let $\psi : M^n \rightarrow \L^{n+2}$ be a spacelike submanifold with $\nabla^{\perp}\mathbf{H}=0$, then the functions $\mathrm{trace}(A_{\xi})$ and $\mathrm{trace}(A_{\eta})$ satisfy
$$
\Delta \mathrm{trace}(A_{\xi})=(\| \alpha^{\sharp}\|^2+ \mathrm{div}\,\alpha^{\sharp})\,\mathrm{trace}(A_{\xi}),
$$
$$
\Delta \mathrm{trace}(A_{\eta})=(\| \alpha^{\sharp}\|^2- \mathrm{div}\,\alpha^{\sharp})\,\mathrm{trace}(A_{\eta}).
$$
\end{lemma}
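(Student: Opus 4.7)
The plan is to derive both identities directly from the gradient identities stated in equation (\ref{1006A}) by taking divergence. This is essentially a one-step computation, so I would not expect any real obstacle beyond the careful use of the product rule for $\mathrm{div}(f X)$.

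Set $u := \mathrm{trace}(A_{\xi})$ and $v := \mathrm{trace}(A_{\eta})$. By (\ref{1006A}) we have $\nabla u = u\,\alpha^{\sharp}$ and $\nabla v = -v\,\alpha^{\sharp}$. Since $\Delta = \mathrm{div}\circ\nabla$, I would compute
$$
\Delta u \;=\; \mathrm{div}(\nabla u) \;=\; \mathrm{div}(u\,\alpha^{\sharp}) \;=\; \langle \nabla u, \alpha^{\sharp}\rangle + u\,\mathrm{div}\,\alpha^{\sharp}.
$$
Substituting $\nabla u = u\,\alpha^{\sharp}$ into the first term gives $\langle u\,\alpha^{\sharp},\alpha^{\sharp}\rangle = u\,\|\alpha^{\sharp}\|^{2}$, and hence
$$
\Delta u \;=\; \bigl(\|\alpha^{\sharp}\|^{2} + \mathrm{div}\,\alpha^{\sharp}\bigr)\,u,
$$
which is the first formula.

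For the second, the same procedure yields
$$
\Delta v \;=\; \mathrm{div}(-v\,\alpha^{\sharp}) \;=\; -\langle \nabla v,\alpha^{\sharp}\rangle - v\,\mathrm{div}\,\alpha^{\sharp} \;=\; -\langle -v\,\alpha^{\sharp},\alpha^{\sharp}\rangle - v\,\mathrm{div}\,\alpha^{\sharp},
$$
giving $\Delta v = (\|\alpha^{\sharp}\|^{2} - \mathrm{div}\,\alpha^{\sharp})\,v$, as required. No Codazzi-type argument or curvature identity is needed here; the entire content of the lemma is the observation that once $\nabla u$ and $\nabla v$ are proportional to $\alpha^{\sharp}$ with coefficients $\pm u, \pm v$, the Laplacians separate into a $\|\alpha^{\sharp}\|^{2}$-piece (coming from pairing $\nabla u$ with $\alpha^{\sharp}$) and a $\mathrm{div}\,\alpha^{\sharp}$-piece (coming from differentiating the scalar factor), with the relative sign dictated by the sign in (\ref{1006A}).
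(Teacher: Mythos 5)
Your argument is correct and is exactly the paper's (implicit) proof: the lemma is stated there as a direct consequence of (\ref{1006A}), obtained by applying $\Delta=\mathrm{div}\circ\nabla$ and the product rule $\mathrm{div}(fX)=\langle\nabla f,X\rangle+f\,\mathrm{div}X$ to $\nabla u=u\,\alpha^{\sharp}$ and $\nabla v=-v\,\alpha^{\sharp}$, just as you do. The sign convention $\Delta=\mathrm{div}\circ\nabla$ you use matches the one used throughout the paper, so nothing further is needed.
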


In general, observe that if $\nabla^{\perp}\xi=\nabla^{\perp}\eta=0$ and the functions $\mathrm{trace}(A_{\xi})$ and  $\mathrm{trace}(A_{\eta})$ are constant, then we obtain $\nabla^{\perp}\mathbf{H}=0$ from (\ref{curme}). In the compact case and after a suitable rescaling, the converse is also true, as shown in the following result

\begin{corollary}\label{coro1}
Let $\psi : M^n \rightarrow \L^{n+2}$ be a spacelike submanifold such that  $\nabla^{\perp}\mathbf{H}=0$. If $M^n$ is compact, then there are two globally defined lightlike normal vector fields $\bar{\xi}, \bar{\eta}\in
\mathfrak{X}^{\perp}(M^{n})$ with $\langle \bar{\xi},\bar{\eta}\rangle=1$ such that $\nabla^{\perp}\bar{\xi}=\nabla^{\perp}\bar{\eta}=0$ $($equivalently, the corresponding one-form satisfies $\bar{\alpha}=0$$)$ and the functions $\mathrm{trace}(A_{\bar{\xi}})$ and  $\mathrm{trace}(A_{\bar{\eta}})$ are non-zero constants.
\end{corollary}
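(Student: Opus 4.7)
The plan is to exhibit the desired rescaling explicitly. Starting from any globally defined lightlike normal frame $(\xi,\eta)$ given by the construction (\ref{291024A}), I seek a smooth function $f>0$ such that $\bar{\xi}=(1/f)\xi$ and $\bar{\eta}=f\eta$ satisfy the required properties. By Remark \ref{cambios}, the associated one-form transforms as $\bar{\alpha}=\alpha-d\log f$, so demanding $\bar{\alpha}=0$ is equivalent to $\alpha=d\log f$. Comparing with the first equation in (\ref{1006A}), namely $\nabla \mathrm{trace}(A_{\xi})=\mathrm{trace}(A_{\xi})\,\alpha^{\sharp}$, the natural candidate is $f:=|\mathrm{trace}(A_{\xi})|$, provided this function is nowhere zero.

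The key preliminary step is therefore to show that both $\mathrm{trace}(A_{\xi})$ and $\mathrm{trace}(A_{\eta})$ are nowhere vanishing on $M^n$. Reading (\ref{1006A}) along any smooth curve $\gamma(t)$ yields the linear ODE $u'(t)=\alpha(\gamma'(t))\,u(t)$ for $u(t):=\mathrm{trace}(A_{\xi})(\gamma(t))$, whose solutions satisfy: if $u$ vanishes at one instant, it vanishes along the whole curve. By path-connectedness, $\mathrm{trace}(A_{\xi})$ either vanishes identically or is nowhere zero, and analogously for $\mathrm{trace}(A_{\eta})$. If both vanished identically, then $\mathbf{H}=0$ by (\ref{curme}), contradicting Remark \ref{1908B} under the compactness assumption. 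If exactly one of them vanished, say $\mathrm{trace}(A_{\eta})\equiv 0$, then by (\ref{curme}) $\mathbf{H}$ would be a nonzero scalar multiple of $\eta$ and hence lightlike everywhere, again contradicting Remark \ref{1908B}. Thus both traces are nowhere zero and $f:=|\mathrm{trace}(A_{\xi})|$ is smooth and strictly positive; since $\mathrm{trace}(A_{\xi})$ has constant sign on the connected $M^n$, one checks directly that $\nabla f=f\,\alpha^{\sharp}$, i.e. $\alpha=d\log f$.

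With this choice, Remark \ref{cambios} yields $\bar{\alpha}=0$ and, by (\ref{one-form}), $\nabla^{\perp}\bar{\xi}=\nabla^{\perp}\bar{\eta}=0$. The pairing $\langle\bar{\xi},\bar{\eta}\rangle=1$ is immediate from $\langle\xi,\eta\rangle=1$. For the traces, $\mathrm{trace}(A_{\bar{\xi}})=(1/f)\,\mathrm{trace}(A_{\xi})=\mathrm{sign}(\mathrm{trace}(A_{\xi}))=\pm 1$ is a non-zero constant. For $\mathrm{trace}(A_{\bar{\eta}})=f\,\mathrm{trace}(A_{\eta})$, a direct computation using (\ref{1006A}) gives $\nabla(f\,\mathrm{trace}(A_{\eta}))=\mathrm{trace}(A_{\eta})(f\,\alpha^{\sharp})+f\,(-\mathrm{trace}(A_{\eta})\,\alpha^{\sharp})=0$, so this product is also a non-zero constant.

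The main obstacle is the nowhere-vanishing argument for the two traces, which is precisely where the compactness hypothesis enters through Remark \ref{1908B}'s exclusion of $\mathbf{H}\equiv 0$ and of $\mathbf{H}$ lightlike everywhere; everything else is essentially a computation with the transformation rules under rescaling of the lightlike frame. Without compactness the rescaling may fail to produce non-zero constants, as anticipated by Counter-example \ref{counter-example}\textbf{(a)} mentioned in the Introduction.
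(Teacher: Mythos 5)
Your proof is correct, and its core is the same as the paper's: rescale the lightlike frame by $\mathrm{trace}(A_{\xi})$ (you use its absolute value), after which $\bar{\alpha}=0$ and the constancy of both traces drop out of (\ref{1006A}). The only real difference is how you show $\mathrm{trace}(A_{\xi})$ is nowhere zero. The paper gets this in one stroke: by (\ref{squared_lenght_H}) the product $\mathrm{trace}(A_{\xi})\,\mathrm{trace}(A_{\eta})=(n^{2}/2)\langle\mathbf{H},\mathbf{H}\rangle$ is constant (since $\nabla^{\perp}$ is metric and $\mathbf{H}$ is parallel) and positive by Remark~\ref{1908B}, so neither factor can vanish anywhere. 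You instead prove a dichotomy for each trace separately, reading $\nabla u=u\,\alpha^{\sharp}$ along curves as a linear ODE so that each trace vanishes identically or nowhere, and then exclude the degenerate cases ($\mathbf{H}\equiv 0$, or $\mathbf{H}$ a nonzero multiple of one lightlike normal, hence lightlike everywhere) by the same Remark~\ref{1908B}. Your version is a bit longer but sound, and it has the small merit of needing only the non-vanishing of $\mathrm{trace}(A_{\xi})$ and $\mathrm{trace}(A_{\eta})$ rather than the positivity of $\langle\mathbf{H},\mathbf{H}\rangle$ (the case $\mathbf{H}$ timelike everywhere, also excluded by Remark~\ref{1908B}, is not actually an obstruction to your construction). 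Either way, compactness enters exactly where it must, as the non-compact example of Remark~\ref{final_2.1} (where $\mathrm{trace}(A_{\eta})\equiv 0$) shows.
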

\begin{proof}
Let us choose any pair $\xi, \eta \in \mathfrak{X}(M^n)$ satisfying $\langle \xi, \xi\rangle=\langle \eta, \eta\rangle =0$ and $\langle \xi, \eta\rangle =1$ everywhere on $M^n$. Put $u=\mathrm{trace}(A_{\xi})$ and $v=\mathrm{trace}(A_{\eta})$. Using (\ref{squared_lenght_H}), the assumption $\nabla^{\perp}\mathbf{H}=0$ implies that the function $uv$ is constant, indeed a positive constant (Remark \ref{1908B}). Now, taking $\bar{\xi}=(1/u)\,\xi$ and $\bar{\eta}=u\,\eta$ we get $\mathrm{trace}(A_{\bar{\xi}})=1$ and $\mathrm{trace}(A_{\bar{\eta}})=uv$. Therefore, (\ref{1006A}) ends the proof.
\end{proof}

\begin{remark}
    {\rm Taking into account that $\bar{\xi}$ is parallel for the normal connection, a direct computation shows that for every $X\in \mathfrak{X}(M)$, we have
    $$
    \nabla^{\perp}_{X}\xi=\frac{1}{u}du(X)\,\xi.
    $$
    On the other hand, note that the compactness assumption on $M^n$ is only needed to assert that the constant $\langle \mathbf{H},\mathbf{H}\rangle$ is positive.}
\end{remark}

\section{Spacelike submanifolds in the light cone}

\noindent Let $\Lambda^{n+1}_+ = \{\,v\in \L^{n+2}\,:\,\langle v,v \rangle
=0,\, v_{0}>0\,\}$ be the future light cone of $\L^{n+2}$ (at the origin).  A spacelike submanifold $\psi:M^{n}\rightarrow \L^{n+2}$ factors through the light cone if $\psi(M^{n})\subset \Lambda^{n+1}_+$. An extrinsic characterization of such spacelike submanifolds is as follows: The necessary and sufficient condition for a spacelike submanifold $\psi: M^{n}\rightarrow \L^{n+2}$ factors through $\Lambda^{n+1}_+$ (after possibly performing a translation or the reflection respect to the spacelike hyperplane $x_0=0$) is the existence of a lightlike normal vector field $\xi$, that satisfies $\nabla^{\perp}\xi=0$ and $A_{\xi}=-\mathrm{Id}$
 \cite[0.9]{Mag} (compare with \cite[Th. 4.3]{IPR}). 

\vspace{1mm}
 
Now, we collect several results, proven in \cite{PPR} and \cite{PaRo}, on a spacelike submanifold $\psi: M^{n}\rightarrow \L^{n+2}$ through the light cone $\Lambda^{n+1}_+$.  Let us write $\psi=(\psi_{0},\psi_1, \cdots , \psi_{n+1})$. The normal vector fields
\begin{equation}\label{normal_vector_fields}
\xi=\psi\,\,\quad \textrm{and}\,\,\quad \eta=\frac{1+\| \nabla
\psi_{0}\|^2 }{2\,\psi_{0}^2}\, \xi -
\frac{1}{\psi_{0}}\,\Big(\partial_{0}\circ \psi +
\psi_{*}(\nabla \psi_{0})\Big),
\end{equation} 
where $\partial_{0}=\partial/\partial x_0$, are lightlike, and satisfy $\langle \xi, \eta \rangle =
1$ with $\nabla^{\perp}\xi = \nabla^{\perp}\eta =0$. 

\begin{remark}
    {\rm  For a spacelike submanifold $\psi: M^{n}\rightarrow \L^{n+2}$ through $\Lambda^{n+1}_+$ and the unit timelike vector $a=(1, 0,\cdots , 0)\in \L^{n+2}$, we have 
    $a^{\mathrm{N}}=\partial_{0}\circ \psi +
\psi_{*}(\nabla \psi_{0})$ and therefore from (\ref{normal_vector_fields}), we get
    $$
    a^{\psi}=\frac{1}{\sqrt{1+\|\nabla \psi_{0}\|^2}}\Big(\partial_{0}\circ \psi +
\psi_{*}(\nabla \psi_{0})\Big)=\frac{\sqrt{1+\|\nabla \psi_{0}\|^2}}{2\psi_{0}}\,\xi- \frac{\psi_{0}}{\sqrt{1+\|\nabla \psi_{0}\|^2}}\,\eta.
    $$
Now, a direct computation shows that, up to sign,
$$
J a^{\psi}=\frac{\psi_{0}}{\sqrt{1+\|\nabla \psi_{0}\|^2}}\,\Big(\frac{1+\|\nabla \psi_{0}\|^2}{2\psi_{0}^2}\,\xi +\eta \, \Big),
$$
and then
$$
 a^{\psi}+J a^{\psi}=\frac{\sqrt{1+\|\nabla \psi_{0}\|^2}}{\psi_{0}}\xi \quad \textrm{ and } \quad  -a^{\psi}+J a^{\psi}= \frac{2\psi_{0}}{\sqrt{1+\|\nabla \psi_{0}\|^2}}\eta.
$$
Hence, the lightlike normal vector fields corresponding to $a=(1, 0,\cdots , 0)\in \L^{n+2}$ and given in (\ref{291024A})  essentially agree with the vector fields given in (\ref{normal_vector_fields}).

 }
\end{remark}

Recall that we also have
\begin{equation}\label{Weingarten_operators}
A_{\xi}=-\mathrm{Id} \,\,\quad \mathrm{and}\,\,\quad
A_{\eta}= -\frac{1+\| \nabla \psi_{0}\|^2
}{2\,\psi_{0}^{2}}\mathrm{Id}+\frac{1}{\psi_{0}}\,\nabla^{2}
\psi_{0}, 
\end{equation}
where $\nabla^{2} \psi_{0} (v)=\nabla_{v}(\nabla
\psi_{0})$, for every $v\in T_{x}M^{n}$ and $x\in M^{n}$. Hence, we obtain 
\begin{equation}\label{traces}
\text{trace}(A_{\xi})=-n \,\,\quad \mathrm{and}\,\,\quad \text{trace}(A_{\eta}) = -n\frac{1+\| \nabla \psi_{0}\|^2}{2\,\psi_{0}^{2}}+\frac{\Delta \psi_{0}}{\psi_{0}}\,.
\end{equation}
Taking into account (\ref{squared_lenght_second_fundamental_form}) and previous formulas, equation (\ref{101a}) reduces now to
\begin{equation}\label{2805A}
S=-2(n-1)\text{trace}(A_{\eta})=n(n-1)\langle \mathbf{H},\mathbf{H} \rangle .
\end{equation}
Moreover, using the formula above, equation (\ref{101a}) also yields  
\begin{equation}\label{260725A}
 n \langle \mathbf{H},\mathbf{H} \rangle  =\langle \Pi, \Pi \rangle . 
\end{equation}

\begin{remark}\label{n=2}{\rm 
In the particular case $n=2$, formula (\ref{2805A}) becomes $K=\langle \mathbf{H},\mathbf{H} \rangle$, where $K$ denotes the Gauss curvature of $M^2$.
This identity was shown to be an important tool in the proof of \cite[Th. 5.4]{PaRo}.}
\end{remark}
Therefore, formula (\ref{curme}) is given in this case by
\begin{equation}\label{mean_curvature_2}
\mathbf{H}=-\frac{S}{2n(n-1)}\,\xi-\eta\,.
\end{equation}

\begin{remark}\label{110725A}
    {\rm Taking into account (\ref{traces}), formula (\ref{2805A}) is equivalent to
\begin{equation}\label{110725B}
\langle\mathbf{H}, \mathbf{H}\rangle=\frac{1+\| \nabla \psi_{0}\|^2
}{\psi_{0}^{2}}-\frac{2}{n\psi_{0}}\,\Delta 
\psi_{0}.
\end{equation}
    For $M^n$ compact and taking $x^{0}\in M^n$, where $\psi_0$ reaches its maximum value, we arrive to 
    $\langle\mathbf{H}, \mathbf{H}\rangle (x^0)\geq 1/\psi_{0}(x^0)>0.$
   Thus, if we assume $\langle\mathbf{H}, \mathbf{H}\rangle=k$ constant, then $k>0$.
   } 
\end{remark}

Observe that we have $A_{\mathbf{H}}=(S/n(n-1))\,\mathrm{Id}-A_{\eta}$, from (\ref{mean_curvature_2}) and (\ref{Weingarten_operators}). Consequently, we can state the following result, which shows how the extrinsic geometry of $\psi$ is encoded in the lightlike normal vector field $\eta$.

\begin{proposition}\label{equivalence1}
For any spacelike submanifold $\psi : M^n \rightarrow \L ^{n+2}$ such that 
$\psi(M^n)\subset \Lambda^{n+1}_+$, the following assertions are equivalent: 

\begin{enumerate}

\item[(i)] $\psi$ is totally umbilical, 

\item[(ii)] The normal vector field $\eta$ is umbilical, 

\item[(iii)] $\psi$ is pseudo-umbilical $($i.e., the mean curvature vector field $\mathbf{H}$ is umbilical$)$.
\end{enumerate}
\end{proposition}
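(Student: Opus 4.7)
The plan is to exploit the extreme rigidity of the Weingarten operators for submanifolds factoring through the light cone: by (\ref{Weingarten_operators}), $A_{\xi}=-\mathrm{Id}$, so $\xi$ is automatically umbilical and all non-trivial extrinsic information is packaged in the single operator $A_{\eta}$. Thus I expect each of the three conditions (i), (ii), (iii) to boil down to the statement that $A_{\eta}$ is a pointwise scalar multiple of the identity, after which the equivalences follow by linear algebra.

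First I would handle (ii) $\Leftrightarrow$ (iii). Combining (\ref{mean_curvature_2}) with $A_{\xi}=-\mathrm{Id}$ gives the relation $A_{\mathbf{H}}=\frac{S}{n(n-1)}\,\mathrm{Id}-A_{\eta}$ (already recorded in the excerpt just before the statement). Since $A_{\mathbf{H}}$ differs from $-A_{\eta}$ only by a scalar multiple of the identity, $A_{\mathbf{H}}=\mu\,\mathrm{Id}$ if and only if $A_{\eta}=\lambda\,\mathrm{Id}$, with $\lambda$ and $\mu$ related by that scalar. This is immediate and requires no geometric input beyond the two displayed formulas.

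Next I would establish (i) $\Leftrightarrow$ (ii) by expanding the total umbilicity condition $\mathrm{II}(X,Y)=\langle X,Y\rangle\,\mathbf{H}$ in the null frame $(\xi,\eta)$. On the left, (\ref{segunda_forma_fundamental}) together with $A_{\xi}=-\mathrm{Id}$ yields
\[
\mathrm{II}(X,Y)=\langle A_{\eta}X,Y\rangle\,\xi-\langle X,Y\rangle\,\eta,
\]
while on the right, (\ref{curme}) gives $\mathbf{H}=\frac{1}{n}\mathrm{trace}(A_{\eta})\,\xi-\eta$. The $\eta$-components agree identically, and equating $\xi$-components forces $A_{\eta}=\frac{1}{n}\mathrm{trace}(A_{\eta})\,\mathrm{Id}$, which is precisely the umbilicity of $\eta$. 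The converse direction is the same computation read backwards: assuming $A_{\eta}=\lambda\,\mathrm{Id}$, one has $\lambda=\mathrm{trace}(A_{\eta})/n$ automatically and $\mathrm{II}(X,Y)=\langle X,Y\rangle\,\mathbf{H}$ is recovered.

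There is essentially no obstacle in this proof; the work is entirely algebraic once one has the explicit description of the Weingarten operators in Section 3. The only point requiring slight care is to recognize that the notion of umbilicity for a normal vector field only asks for proportionality to the identity \emph{for some function}, without prescribing that function, so that each implication (in particular (iii) $\Rightarrow$ (ii) and (ii) $\Rightarrow$ (i)) goes through without having to identify the proportionality constant beforehand; the constant is then forced a posteriori by taking traces and by the scalar curvature relation (\ref{2805A}).
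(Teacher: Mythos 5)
Your proof is correct and takes essentially the same route as the paper, which derives the proposition directly from the identity $A_{\mathbf{H}}=\bigl(S/n(n-1)\bigr)\,\mathrm{Id}-A_{\eta}$ together with $A_{\xi}=-\mathrm{Id}$; your expansion of $\mathrm{II}$ in the null frame is just a slightly more explicit rendering of that same reduction.
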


On the other hand, the parallelism of the mean curvature vector field can be equivalently characterized as follows:

\begin{proposition}\label{equivalence2}
For any spacelike submanifold  $\psi:M^{n}\rightarrow \L^{m+2}$ such that 
$\psi(M^n)\subset \Lambda^{n+1}_+$, the following assertions are equivalent:
\begin{enumerate}
\item  The mean curvature vector field $\mathbf{H}$ of $\psi$ is  parallel,
\item The mean curvature vector field satisfies $\langle \mathbf{H},\mathbf{H} \rangle =$constant, 
\item The scalar curvature $S$ of the induced metric on $M^n$ is constant.
\item The squared length $\langle\Pi, \Pi \rangle$ of the second fundamental form, $\Pi$, is constant.
\end{enumerate}
If in addition, we have $\langle \mathbf{H}, \mathbf{H} \rangle \neq 0$ at every point of $M^n$, then each of the previous assertions is equivalent to: 
\begin{enumerate}
    \item[(4)] The normalized mean curvature vector field $\big(1/\sqrt{|\langle \mathbf{H},\mathbf{H} \rangle|}\, \big)\,\mathbf{H}$ is parallel.
\end{enumerate}
\end{proposition}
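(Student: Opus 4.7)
The plan is to leverage three facts already proven in the paper for spacelike submanifolds in the light cone: the identities
$$S=n(n-1)\langle \mathbf{H},\mathbf{H}\rangle,\qquad n\langle \mathbf{H},\mathbf{H}\rangle=\langle \Pi,\Pi\rangle,$$
and the representation $\mathbf{H}=-\tfrac{S}{2n(n-1)}\,\xi-\eta$ with $\nabla^{\perp}\xi=\nabla^{\perp}\eta=0$. The first two identities give the equivalences $(2)\Leftrightarrow(3)\Leftrightarrow(4^*)$ (where $(4^*)$ denotes constancy of $\langle\Pi,\Pi\rangle$) essentially for free: since $n\geq 2$, the functions $S$, $\langle\mathbf{H},\mathbf{H}\rangle$, and $\langle\Pi,\Pi\rangle$ are constant multiples of one another.

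Next I would prove $(1)\Leftrightarrow(2)$ by differentiating $\mathbf{H}=-\tfrac{S}{2n(n-1)}\,\xi-\eta$. Because $\xi,\eta$ are parallel for the normal connection, one finds
$$\nabla^{\perp}_{X}\mathbf{H}=-\frac{X(S)}{2n(n-1)}\,\xi$$
for every $X\in\mathfrak{X}(M^n)$. Since $\xi$ is a nowhere-vanishing (lightlike) normal vector field, this expression vanishes identically if and only if $S$ is constant, giving both directions at once. (Alternatively, $(1)\Rightarrow(2)$ follows from the general identity $X\langle\mathbf{H},\mathbf{H}\rangle=2\langle\nabla^{\perp}_{X}\mathbf{H},\mathbf{H}\rangle$.)

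For the last equivalence, under the extra assumption $\langle\mathbf{H},\mathbf{H}\rangle\neq 0$ everywhere, let $\varepsilon$ be the (constant) sign of $\langle\mathbf{H},\mathbf{H}\rangle$ on the connected manifold $M^n$ and set $h=\sqrt{|\langle\mathbf{H},\mathbf{H}\rangle|}>0$, so that $\widetilde{\mathbf{H}}:=h^{-1}\mathbf{H}$ has $\langle \widetilde{\mathbf{H}},\widetilde{\mathbf{H}}\rangle=\varepsilon$. The direction $(1)\Rightarrow(4)$ is immediate since $(1)\Rightarrow(2)$ makes $h$ constant, hence $\nabla^{\perp}\widetilde{\mathbf{H}}=h^{-1}\nabla^{\perp}\mathbf{H}=0$. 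For $(4)\Rightarrow(1)$, I would substitute $S/(n(n-1))=\varepsilon h^2$ into the formula for $\mathbf{H}$ to obtain $\widetilde{\mathbf{H}}=-\tfrac{\varepsilon h}{2}\xi-h^{-1}\eta$, whence
$$\nabla^{\perp}_{X}\widetilde{\mathbf{H}}=X(h)\Bigl(-\tfrac{\varepsilon}{2}\xi+h^{-2}\eta\Bigr).$$
The bracket is a nowhere-vanishing normal vector field because $\xi,\eta$ are linearly independent everywhere, so $\nabla^{\perp}\widetilde{\mathbf{H}}=0$ forces $X(h)=0$ for all $X$, that is, $h$ (and therefore $\langle\mathbf{H},\mathbf{H}\rangle$) is constant, which by the already-established $(2)\Rightarrow(1)$ yields $\nabla^{\perp}\mathbf{H}=0$.

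The only mildly subtle step is $(4)\Rightarrow(1)$: in general, parallelism of the normalized mean curvature vector does not imply parallelism of $\mathbf{H}$ itself, so one really has to exploit the specific expansion of $\mathbf{H}$ in the lightlike frame $(\xi,\eta)$ to conclude that the magnitude $h$ is forced to be constant. The other implications are essentially algebraic consequences of the light-cone identities.
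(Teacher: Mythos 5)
Your proof is correct and follows essentially the same route as the paper: the identities $S=n(n-1)\langle \mathbf{H},\mathbf{H}\rangle$ and $n\langle \mathbf{H},\mathbf{H}\rangle=\langle \Pi,\Pi\rangle$ handle the algebraic equivalences, and differentiating $\mathbf{H}=-\tfrac{S}{2n(n-1)}\,\xi-\eta$ in the parallel lightlike frame gives the rest, including the reverse implication for the normalized mean curvature vector. Your write-up is merely more explicit than the paper's (which leaves the computation of $\nabla^{\perp}_{X}\widetilde{\mathbf{H}}$ implicit), but no new idea or gap is involved.
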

\begin{proof}
The equivalence between the first three points is a direct consequence of formulas (\ref{2805A}) and (\ref{mean_curvature_2}) since $\xi$ and $\eta$ are parallel for the normal connection. 

It is a general fact that under the assumption $\langle \mathbf{H}, \mathbf{H} \rangle \neq 0$ at every point of $M^n$, the condition $\nabla^{\perp} \mathbf{H}=0$ implies that the normalized mean curvature vector field is also parallel. 
Conversely, if $\langle \mathbf{H},\mathbf{H} \rangle$ never vanishes, from (\ref{2805A}) the normalized mean curvature vector field of $\psi$ 
is given by
$$
\frac{\sqrt{n(n-1)}}{\sqrt{|S|}}\, \mathbf{H}.
$$
Taking into account again that $\nabla^{\perp}\xi = \nabla^{\perp}\eta =0$ and (\ref{mean_curvature_2}), we get 
$$
X(|S|)=0,
$$
for any $X\in\mathfrak{X}(M^n)$ and consequently $S$ is constant.
\end{proof}

\begin{remark}\label{final_2.1}{\rm 

 Observe that for any spacelike submanifold $\psi : M^n \rightarrow \L ^{n+2}$ such that $\psi(M^n)\subset \Lambda^{n+1}_+$ we have that $\mathbf{H}$ never vanishes thanks to (\ref{mean_curvature_2}). However, $\langle \mathbf{H},\mathbf{H} \rangle$ may vanish somewhere. 
Even more, in the non-compact case, there is an isometric embedding given by
$$
\psi\colon \E^{n}\rightarrow \Lambda^{n+1}_+\subset \L^{n+2}, \quad \psi(x)=\Big(\frac{1+|x|^2}{2}, \frac{-1+ |x|^2}{2}, x\Big),
$$
where $|\cdot|$ denotes the usual Euclidean norm in $\E^n$, \cite{Palmer}, whose mean curvature vector field satisfies $\mathbf{H}=-\eta$, directly from (\ref{mean_curvature_2}), and, hence $\langle \mathbf{H},\mathbf{H} \rangle=0$ everywhere.
On the other hand, $\psi_0(x)=(1+|x|^2)/2$, and (\ref{Weingarten_operators}) gives 
$A_{\eta}=0$ at $x\in \mathbb{E}^n$. Thus, this isometric embedding of $\E^{n}$ in $\mathbb{L}^{n+2}$ is totally umbilical. This situation contrasts with Counter-example \ref{counter-example} below, where we exhibit an isometric immersion of the Euclidean plane $\E^2$ in $\mathbb{L}^4$ through $\Lambda_+^3$ that is not totally umbilical.
}
\end{remark}

\begin{remark} {\rm
It should be recalled that every compact $n$-dimensional spacelike submanifold in $\L^{n+2}$ that factors through $\Lambda^{n+1}_+$ is diffeomorphic to an $n$-sphere \cite[Prop. 5.1, Remark 5.2]{PaRo}. Consequently, any compact $n$-dimensional spacelike submanifold in $\L^{n+2}$ that factors through $\Lambda^{n+1}_+$ must be necessarily embedded. However, that is not the case if the codimension of the spacelike submanifold is assumed to be $\geq 3$.
}
\end{remark}

\begin{example}\label{examples}{\rm (\cite[Ex. 4.2]{PaRo}, \cite[Th. 4.2]{ACR}) The totally umbilical compact spacelike submanifolds  $\psi : M^n \rightarrow \L ^{n+2}$ with
$\psi(M^n)\subset \Lambda^{n+1}_+$ ($M^n$ is diffeomorphic to an $n$-sphere from the previous Remark) can be explicitly described as follows: 
For each $v=(v_0,v_1,...,v_{n+1})\in \L^{n+2}$ such that $\langle v,v\rangle = -1$ and $v_0<0$, and each real positive number $r$, we put 
\begin{equation}\label{sphere}
\S^n(v,r)=\{x\in \L^{n+2}\, :\, \langle x,x\rangle = 0, \langle v,x\rangle = r\}.
\end{equation}
Every $\S^n(v,r)$ is a compact $n$-dimensional spacelike submanifold of $\L^{n+2}$ contained in $\Lambda^{n+1}_+$. Its normal bundle is spanned by $\xi, \eta \in \mathfrak{X}^{\perp}(\S^n(v,r))$, where  $\xi_x=x$ and $\eta_x=(1/2r^2)\,x+(1/r)\,v$, for any $x\in \S^n(v,r)$. We clearly have $\langle \xi,\xi \rangle = \langle \eta,\eta \rangle = 0$ and $\langle \xi,\eta \rangle = 1$. The corresponding Weingarten operators are 
$$
A_{\xi}=-\mathrm{Id},\quad   A_{\eta}=-\frac {1}{2r^2}\,\mathrm{Id}\,.
$$
Therefore $\S^n(v,r)$ is totally umbilical with $\mathrm{II}(X,Y)=\langle X,Y \rangle \mathbf{H}$, where $\mathbf{H}=-(1/2r^2)\,\xi - \eta$. Thus, from the Gauss equation (\ref{1}), we conclude that $\S^n(v,r)$ has constant sectional curvature equal to $1/r^2$. Summing up, $\S^n(v,r)$ defines a totally umbilical spacelike embedding of the $n$-dimensional sphere, of radius $r$, in $\L^{n+2}$ through the light cone $\Lambda^{n+1}_+$. Conversely, every totally umbilical codimension two compact spacelike submanifold $\in \mathbb{L}^{n+2}$ through $\Lambda^{n+1}_+$ can be achieved in this way (compare with \cite[Example 4.2]{PaRo}).
Note also that the map 
\begin{equation}\label{parametrization}
\S^{n}\rightarrow \Lambda^{n+1}_+, \quad x\mapsto \frac{r}{\langle v , (1,x)\rangle}\,(1,x) 
\end{equation}
provides a parametrization of each spacelike submanifold $\S^n(v,r)$.}
\end{example}

\section{Spacelike graphs in the light cone}

\noindent The sphere $\S^{n}$ can be topologically identified with the space of rays spanned by vectors in $\Lambda^{n+1}_{+}$, as a subset of the real projective space $\R \textrm{P}^{n+1}$. Explicitly, we have a natural projection
\begin{equation}\label{projection}
\pi \colon \Lambda^{n+1}_{+} \rightarrow \S^{n},\quad (y_0,y_1,\cdots ,y_{n+1}) \mapsto \frac{1}{y_{0}}\,(y_{1}, \cdots , y_{n+1}).
\end{equation}
The smooth map $\pi$ is a trivial principal bundle with structure group $\R_{>0}$, the multiplicative group of positive real numbers, and its vertical distribution agrees with the radical of the degenerate metric on $\Lambda^{n+1}_{+}$. Every global section of $\pi$ provides a global trivialization of $\Lambda^{n+1}_{+}$. In particular, the global section 
\begin{equation}\label{global_section}
\S^{n}\to \Lambda^{n+1}_{+},\quad x \mapsto (1,x),
\end{equation}
provides us with the diffeomorphism 
\begin{equation}\label{associated_diffeomorphism}
\Phi\colon \R_{>0}\times \S^{n} \to \Lambda^{n+1}_{+}, \quad \Phi(t,x)=(t,tx).
\end{equation}
More generally, there is a one-to-one correspondence between $C^{\infty}(\S^{n})$ and the set of all global sections of $\pi$, $\Gamma(\pi)$, defined as follows: for every $f\in C^{\infty}(\S^{n})$, we can consider the section of $\pi$ given by
\begin{equation}\label{global_section2}
i_{f}\colon \S^{n}\to \Lambda^{n+1}_{+},\quad x \mapsto e^{f(x)}(1,x).
\end{equation}
This assignment defines a one-to-one map $i\colon C^{\infty}(\S^{n}) \to \Gamma(\pi)$, with inverse
$$
i^{-1}(s)(x)=\log (s_{0}(x)),\quad x\in \S^{n},
$$
where $s\in \Gamma(\pi)$ is given by $s(x)=(s_{0}(x),s_1(x), \cdots , s_{n+1}(x)).$
Note that $i_{f}$ corresponds to the global trivialization $(t,x)\mapsto e^{f(x)}(t,tx)$ of $\Lambda^{n+1}_{+}$. 

\smallskip

Every section $i_f$ of $\pi$ can be considered a codimension two spacelike embedding 
and the induced metric $g_f$ on $\S^n$  via $i_f$ satisfies 
\begin{equation}\label{induced_metric}
g_f= e^{2f}g_{0}, 
\end{equation}
where $g_{0}$ is the round metric of constant sectional curvature $1$. Moreover, every Riemannian metric on $\S^n$ conformally related to $g_{0}$ is achieved in this way (Remark \ref{final_2.1}).  In conformal geometry, this fact motivates that $\Lambda^{n+1}_{+}$ is called the bundle of scales of $\S^{n}$.  

\vspace{1mm}

In this picture, the orthochronous Lorentz group $O^{+}(1, n+1)$ acts transitively on $\S^{n}$, preserving the conformal class of the round metric. The sphere $\S^{n}$, as a homogeneous space of $O^{+}(1, n+1)$, is the model for the conformal Riemannian geometry as a Cartan geometry, see for instance, \cite[Section 1.6]{CS09}.

\smallskip

This construction motives that, for every $f\in C^{\infty}(\S^n)$, the hypersurface 
\begin{equation}\label{grafo}
\Sigma^{n}_{f}:=\big\{\,(e^{f(x)}, e^{f(x)}x)\in \Lambda^{n+1}_{+}\; :\; x\in \S^{n}\,\big\}
\end{equation}
in $\Lambda^{n+1}_+$ may be called the graph in $\Lambda^{n+1}_{+}$ defined by $e^{f}$. Clearly, every graph $\Sigma^{n}_{f}$ is a compact codimension two spacelike submanifold in $\L^{n+2}$ through $\Lambda^{n+1}_{+}$.

\begin{proposition}\label{010325A}
Each spacelike embedding $\psi\colon\S^{n}\to \Lambda^{n+1}_{+}$ may be written as  
$$
\psi(x)=e^{f(x)}(1, \Phi(x)),\quad x\in \S^{n},
$$
where $f:=\log\psi_{0}$ and $\Phi$  is the diffeomorphism of $\mathbb{S}^n$ given by 
$$\Phi(x)=\Big(\,\frac{\psi_{1}(x)}{e^{f(x)}}, \cdots , \frac{\psi_{n}(x)}{e^{f(x)}}\,\Big).
$$ Thus, we have $\psi=i_{f}\circ \Phi$, that is, up to a diffeomorphism of $\S^{n}$, any compact codimension two spacelike submanifold in $\L^{n+2}$ contained in $\Lambda^{n+1}_{+}$ is a graph over $\mathbb{S}^n$ in $\Lambda^{n+1}_{+}$. In this case, the metric $g$ induced on $\S^n$ via $\psi$ satisfies $g=e^{2f\circ \Phi}\,\Phi^{*}(g_{0}).$ 
\end{proposition}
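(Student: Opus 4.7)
The plan is to read off $f$ and $\Phi$ directly from the component functions of $\psi$, show that $\Phi$ is a diffeomorphism of $\S^n$ by a local injectivity argument, and then pull back the formula $g_f = e^{2f}g_0$ already established in (\ref{induced_metric}).

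First I would define $f(x):=\log\psi_0(x)$, which is smooth since $\psi(\S^n)\subset \Lambda^{n+1}_+$ forces $\psi_0>0$. The lightlike condition $-\psi_0^2+\sum_{i=1}^{n+1}\psi_i^2=0$ is exactly the statement that
\[
\Phi(x):=\Big(\frac{\psi_1(x)}{e^{f(x)}},\dots,\frac{\psi_{n+1}(x)}{e^{f(x)}}\Big)
\]
lies in $\S^n$, so $\Phi\colon\S^n\to \S^n$ is a well-defined smooth map and $\psi(x)=e^{f(x)}(1,\Phi(x))=(i_f\circ \Phi)(x)$ by construction. Equivalently, $\Phi=\pi\circ \psi$ where $\pi$ is the projection (\ref{projection}).

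The main step is to show that $\Phi$ is a diffeomorphism. I would argue that $\Phi$ is a local diffeomorphism by contradiction: suppose $X\in T_x\S^n$ satisfies $\Phi_*(X)=0$. Differentiating $\psi=e^{f}\cdot(1,\Phi)$ gives
\[
\psi_*(X)=df(X)\,\psi(x)+e^{f(x)}(0,\Phi_*(X))=df(X)\,\psi(x).
\]
Since $\psi(x)\in \Lambda^{n+1}_+$ is lightlike, $\psi_*(X)$ is lightlike as well; but $\psi$ is a spacelike immersion, so $\psi_*(X)$ must be spacelike or zero, forcing $\psi_*(X)=0$ and hence $X=0$ by injectivity of $\psi_*$. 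Therefore $\Phi$ is a local diffeomorphism between compact manifolds, hence a finite covering. For $n\geq 2$ the sphere $\S^n$ is simply connected, so every covering $\S^n\to \S^n$ is a diffeomorphism, and $\Phi$ is a diffeomorphism. This step is where the whole proof really rests; the rest is bookkeeping.

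Finally, from $\psi=i_f\circ \Phi$ and $i_f^*\langle\,,\,\rangle=g_f=e^{2f}g_0$ (formula (\ref{induced_metric})), the induced metric is
\[
g=\psi^*\langle\,,\,\rangle=\Phi^*(e^{2f}g_0)=e^{2f\circ \Phi}\,\Phi^*(g_0),
\]
which is exactly the asserted expression. This also makes clear that, up to the reparametrization $\Phi$, every such embedding $\psi$ is the graph $\Sigma^n_f$ defined in (\ref{grafo}).
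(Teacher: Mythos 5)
Your proposal is correct and follows essentially the same route as the paper: read off $f$ and $\Phi$ from the components of $\psi$, show $\Phi$ is a local diffeomorphism by differentiating $\psi=e^{f}(1,\Phi)$ and using that a nonzero tangent vector of a spacelike immersion cannot be lightlike, then invoke compactness and simple connectedness of $\S^n$ to upgrade to a global diffeomorphism, and finally pull back $g_f=e^{2f}g_0$. Your write-up is in fact slightly more explicit than the paper's at the local-diffeomorphism step (the paper only displays the formula for $d\psi$ and asserts the conclusion), and you correctly index the components of $\Phi$ up to $\psi_{n+1}$.
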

\begin{proof}
The decomposition $\psi=i_{f}\circ \Phi$ is trivial, and the assertion of the induced metric is a direct consequence of this decomposition; it only remains to show that $\Phi$ is a diffeomorphism. In fact, since
$$
(d\psi)_x (v)= v(f \circ \Phi)\,e^{f(\Phi(x))}\,(1, \Phi(x))+\,e^{f(\Phi(x))}\,(0, (d\Phi)_x(v)),
$$
we have $\Phi\colon \S^{n}\to \S^{n}$ is a local diffeomorphism. But a local diffeomorphism between compact and simply connected manifolds must be a global diffeomorphism \cite[Prop. 5.6.1]{DoCarmo}. 
\end{proof}
Using the relation between the scalar curvatures of two conformally related metrics, we get for the scalar curvature $S_{f}$ of $g_{f}=e^{2f}g_{0}$,
\begin{equation}\label{10025A}
  e^{2f}S_{f}=n(n-1)-2(n-1)\Delta^{0}\, f- (n-1)(n-2)\| \nabla^{0} f \|_{0}^2,   
\end{equation}
where $\Delta^0$, $\nabla^{0}$ and $\| \; \|_{0}^2$ denote the Laplacian, the gradient and the square of the norm for the metric $g_{0}$, respectively.
In particular, using (\ref{2805A}), we have for the square of the length of the mean curvature vector field $\mathbf{H}_{f}$ of $i_{f}$ the following formula:
\begin{equation}\label{0403A}
\langle \mathbf{H}_{f}, \mathbf{H}_{f}\rangle=\frac{1}{n\, e^{2f}}\Big( n-2\Delta^{0}f-(n-2)\|\nabla^{0}f \|^{2}_{0}\Big).
\end{equation}

\begin{example}{\rm As a complement to Remark \ref{1908B}, we will show in this example a spacelike graph over $\S^n$ in $\Lambda^{n+1}_+$, such that the square length of the mean curvature vector field can take negative and positive values.  
Choose now $f$ as the restriction on $\S^n$ of the projection $(x_{1},\cdots ,x_{n+1}) \mapsto x_{n+1}$. Because $f$ is the restriction to $\S^n$ of a linear function, we have  $\Delta^{0} f + nf=0$, where $\Delta^{0}$ denotes the Laplacian of the metric $g_{0}$. That is, $f$ is an eigenfunction of the Laplacian corresponding to $\lambda_{1}(\S^{n},g_{0})=n$. A direct computation gives that
$
\| (\nabla^{0} f) (x) \|_{0}^2=1-x^{2}_{n+1}$, at every $x\in \S^{n}$. Hence, from formulas (\ref{2805A}) and (\ref{10025A}), we obtain
\begin{equation}\label{130125A}
\langle\mathbf{H}_f,  \mathbf{H}_f\rangle (x)=\frac{1}{n(n-1)}\,S_{f}(x)=\frac{1}{n\, e^{2x_{n+1}}}\Big( (n-2)\,x^{2}_{n+1}+ 2n \,x_{n+1} +2\Big),
\end{equation}
where $\mathbf{H}_f$ is the mean curvature vector field of $i_f$. Thus, we have
$$
\langle\mathbf{H}_f,  \mathbf{H}_f\rangle_{(0, \cdots, 1)}=\frac{3}{e^2}, \quad \textrm{ and }\quad \langle\mathbf{H}_f,  \mathbf{H}_f\rangle_{(0, \cdots, -1)}=-e^2.
$$
Therefore, there exists some point where $\langle\mathbf{H}_{f},  \mathbf{H}_{f}\rangle$ vanishes. In the particular case $n=2$, formula (\ref{130125A}) reduces to
$$
\langle\mathbf{H}_f,  \mathbf{H}_f\rangle (x)= K_{f}(x)=\frac{2x_{3}+1}{e^{2x_{3}}},
$$ 
where $K_f$ is the Gauss curvature of the metric $g_{f}$ (Remark \ref{n=2}). Therefore, $\langle\mathbf{H}_f,  \mathbf{H}_f\rangle (x)=0$, at any point $x=(x_{1},x_{2}, -1/2)\in \S^{2}$.
 } 
\end{example}

\smallskip

Coming back to the general case, the square length of the mean curvature vector field $\mathbf{H}_f$ of $\Sigma^{n}_{f}$ in $\L^{n+2}$ satisfies
\begin{equation}\label{PDE}
2\Delta^{0}f+(n-2)\,\|\nabla^{0}f\|^{2}_{0}=n\,(1- \|\mathbf{H}_{f}\|^{2}\,e^{2f}),
\end{equation}
where we put $\|\mathbf{H}_f\|^{2}=\langle \mathbf{H}_f,\mathbf{H}_f\rangle$, and $\nabla^0$ and $\Delta^{0}$ are the gradient and the Laplacian on $(\S^{n}, g_{0})$, respectively, see (\ref{0403A}).
From Remark \ref{110725A}, if we assume $ \|\mathbf{H}_f\|^{2}$ constant,  then $ \|\mathbf{H}_f\|^{2}$ is a positive constant $k$. Recall also that the assumption  $ \|\mathbf{H}_f\|^{2}$ constant turns out into $\nabla^{\perp}\mathbf{H}_f=0$, where $\nabla^{\perp}$ is the normal connection, according to Proposition \ref{equivalence2}.
Therefore, for each constant $k>0$, the elliptic PDE  
\[
(\mathrm{E}) \hspace*{40mm} 2\Delta^{0}f+(n-2)\,\|\nabla^{0}f\|^{2}_{0}=n\,(1- k\,e^{2f})\hspace*{50mm} 
\]
is the equation of spacelike graphs on $\S^{n}$ in $\Lambda^{n+1}_{+}$ with $\|\mathbf{H}_f\|^{2}=k$; i.e., spacelike graphs on $\S^{n}$ in $\Lambda^{n+1}_{+}$ with parallel mean curvature vector field in $\L^{n+2}.$

\vspace{1mm}
 
Thanks to Proposition \ref{equivalence2} and formula (\ref{2805A}), for each $k>0$, equation (E) admits two distinct variational interpretations. On the one hand, since (E) is equivalent to the condition that $g_{f}$ has constant scalar curvature $n(n-1)k$, $k>0$, it can be seen as the Euler–Lagrange equation corresponding to the variational problem
\[
h\in C^{\infty}(\S^n) \longmapsto \int_{\S^n}S_h\,dV_{g_h}\in\mathbb{R},
\]
where $S_h$ denotes the scalar curvature of the metric $g_{h}=e^{2h}g_0$, under the volume constraint
\[
\int_{\S^n}dV_{g_h}=\frac{1}{\sqrt{k}}\mathrm{vol}(\S^n),
\]
where $\mathrm{vol}(\S^n)$ is the volume of $(\S^{n}, g_{0})$, see  \cite[Prop. 4.25]{Besse}.

\vspace{1mm}

On the other hand, since (E) is also equivalent to the condition that $i_{f}$ has parallel mean curvature vector field, it can alternatively be viewed as the Euler–Lagrange equation of the variational problem
\[
h\in C^{\infty}(\S^n) \longmapsto \int_{\S^n}\|\mathbf{H}_h\|^2\,dV_{g_h}\in\mathbb{R},
\]
where $\mathbf{H}_h$ denotes the mean curvature vector field of the space-like embedding $i_h : \S^n \mapsto \Lambda^{n+1}_+ \subset \L^{n+2}$ as defined in (\ref{global_section2}), under the same volume constraint as above. From (\ref{2805A}), the two variational problems are equivalent.

\vspace{1mm}

If we assume $n\geq 3$ and make the change of variable $e^{2f}=\varphi^{\frac{4}{n-2}}$, $\varphi>0$, differential equation (E) turns into 
\[
\mathrm{(E)'} \hspace*{50mm} \square^0 \varphi +\frac{n(n-2)}{4}\, k\, \varphi^{\frac{n+2}{n-2}}=0,\hspace*{55mm}
\]
where $\square^0 \varphi:=\Delta^0 \varphi - (n(n-2)/4)\varphi$, which is a very special case of the Yamabe equation \cite[eq. (1.2)]{Lee-Parker} (note that the convention sign for the Laplacian in \cite{Lee-Parker} is the opposite to the one considered here). Of course, if $\|\mathbf{H}_f\|^2=k$ for some $f\in C^{\infty}(\mathbb{S}^n)$, then $\varphi$, defined as previously from $f$, satisfies $\mathrm{(E)'}$.

\vspace{1mm}

Following \cite[p. 39]{Lee-Parker} we put $\|\varphi\|_p=\Big(\int_{\S^n}\varphi^p\,dV_{g_0}\Big)^{\frac{1}{p}}$, where $p=2n/(n-2)$, and $E(\varphi)=\int_{\S^n}\big(\,\|\nabla^0\varphi \|^2_0 + (n(n-2)k/4)\varphi^2\,\big)dV_{g_0}$. For $e^f=\varphi^{\frac{2}{n-2}}$, we have
\[
\|\varphi\|^2_p=\Big(\int_{\S^n}dV_{g_f}\Big)^{\frac{2}{p}}\quad \text{and} \quad \frac{4}{n(n-2)}E(\varphi)= \int_{\S^n}\|\mathbf{H}_f\|^2\,dV_{g_f}\,.
\]
Again from \cite[p. 39]{Lee-Parker}, equation $\mathrm{(E)'}$ has a variational meaning for each $k>0$. It is the Euler-Lagrange equation for the variational problem
\[
\varphi\in C^{\infty}(\S^n),\, \varphi>0\, \longmapsto \, \frac{E(\varphi)}{\|\varphi \|^2_p}\in\mathbb{R}.
\]
\section{Integral formulas}
\noindent Throughout this section, let $\psi: M^{n}\rightarrow \L^{n+2}$ be a spacelike submanifold and $\xi, \eta \in \mathfrak{X}^{\perp}(M^n)$ such that $\langle \xi, \xi\rangle=\langle \eta, \eta\rangle =0$ and $\langle \xi, \eta\rangle =1$.
For each $a\in \L^{n+2}$ consider $a^{\top}\in \mathfrak{X}(M^n)$  and $a^{\mathrm{N}}\in \mathfrak{X}^{\perp}(M^n)$ its tangential and normal components to $M^n$, respectively, defined as in (\ref{decomposition_1}).  It is easy to see that 
\begin{equation}\label{gradiente}
a^{\top}=\nabla\langle a, \psi \rangle,
\end{equation} 
where $\nabla$ denotes the gradient operator on $M^n$, and 
\begin{equation}\label{normal_part}
a^{\mathrm{N}}= \langle a, \xi \rangle \eta + \langle a, \eta \rangle \xi.
\end{equation}
Now, from $\overline{\nabla}a=0$, using (\ref{Gauss_Weingarten}) and taken into account (\ref{normal_part}) jointly with (\ref{one-form}) we get
\begin{equation}\label{2505}
\nabla_{X}a^{\top}-\langle a, \xi\rangle A_{\eta}(X)-\langle a, \eta\rangle A_{\xi}(X)=0,
\end{equation}
and 
\begin{equation}\label{2505_a}
\mathrm{II}(X, a^{\top})-\langle a, A_{\xi}(X)\rangle \eta-\langle a, A_{\eta}(X)\rangle \xi=0,
\end{equation}
for any $X\in \mathfrak{X}(M)$. 
\begin{proposition}\label{2605C}
Let $\psi: M^{n}\rightarrow \L^{n+2}$ be a compact spacelike submanifold and let us consider $\xi, \eta \in \mathfrak{X}^{\perp}(M^n)$ such that $\langle \xi, \xi\rangle=\langle \eta, \eta\rangle =0$ and $\langle \xi, \eta\rangle =1$. For each $a\in \L^{n+2}$, the following integral formula holds
$$
\int_{M^n}\Big[\frac{n-1}{n}\,a^{\top}(\mathrm{trace}(A_{\eta}))+ \langle a, \xi\rangle\Big(\mathrm{trace}(A^{2}_{\eta})-\frac{1}{n}\,(\mathrm{trace}(A_{\eta}))^2\Big)
$$
$$
 +\langle a, \eta\rangle \Big(\mathrm{trace}(A_{\eta} A_{\xi})-\frac{1}{n}\,\mathrm{trace}(A_{\eta})\mathrm{trace}(A_{\xi})\Big)
$$   
\begin{equation}\label{030125A}
\hspace*{-6mm}-\langle A_{\eta}a^{\top},\alpha^{\sharp}\rangle + \langle a^{\top}, \alpha^{\sharp} \rangle \,\mathrm{trace}(A_{\eta})\Big]\,dV_g=0,
\end{equation}
where $\alpha^{\sharp}\in \mathfrak{X}(M^n)$ is metrically equivalent to the one-form $\alpha$ given in {\rm (\ref{one-form})}.
\end{proposition}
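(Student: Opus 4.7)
The plan is to exhibit the integrand as the divergence of a well-chosen tangent vector field on $M^n$, and then apply the divergence theorem on the compact manifold. The natural candidate, suggested by the presence of both $A_\eta(a^\top)$ and traceless corrections, is
\[
V := A_\eta(a^\top) - \frac{1}{n}\,\mathrm{trace}(A_\eta)\,a^\top \in \mathfrak{X}(M^n).
\]
My claim is that $\mathrm{div}(V)$ equals exactly the integrand in \eqref{030125A}; once this is verified, compactness of $M^n$ closes the argument.

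First, I would compute $\mathrm{div}(A_\eta\, a^\top)$ at a point using a local geodesic frame $(e_1,\dots,e_n)$. Splitting via the Leibniz rule,
\[
\mathrm{div}(A_\eta\, a^\top) = \sum_{i}\langle (\nabla_{e_i}A_\eta)a^\top, e_i\rangle + \sum_{i}\langle A_\eta(\nabla_{e_i}a^\top), e_i\rangle.
\]
For the second sum I would substitute \eqref{2505}, i.e.\ $\nabla_{e_i}a^\top = \langle a,\xi\rangle A_\eta(e_i)+\langle a,\eta\rangle A_\xi(e_i)$, together with the self-adjointness of $A_\eta$, to obtain $\langle a,\xi\rangle\mathrm{trace}(A_\eta^2)+\langle a,\eta\rangle\mathrm{trace}(A_\eta A_\xi)$. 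For the first sum I would use the Codazzi identity \eqref{Codazzi_3} with $X=e_i$, $Y=a^\top$ to swap derivatives:
\[
(\nabla_{e_i}A_\eta)a^\top = (\nabla_{a^\top}A_\eta)e_i - \alpha(e_i)A_\eta(a^\top) + \alpha(a^\top)A_\eta(e_i).
\]
Summing in $i$ and using that in the chosen frame $\sum_i \langle (\nabla_{a^\top}A_\eta)e_i,e_i\rangle = a^\top(\mathrm{trace}(A_\eta))$ produces
\[
\sum_i \langle (\nabla_{e_i}A_\eta)a^\top,e_i\rangle = a^\top(\mathrm{trace}(A_\eta)) - \langle A_\eta a^\top,\alpha^\sharp\rangle + \langle a^\top,\alpha^\sharp\rangle\,\mathrm{trace}(A_\eta).
\]

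Next, I would compute $\mathrm{div}(\mathrm{trace}(A_\eta)\,a^\top) = a^\top(\mathrm{trace}(A_\eta)) + \mathrm{trace}(A_\eta)\,\mathrm{div}(a^\top)$, where \eqref{2505} immediately gives $\mathrm{div}(a^\top)=\langle a,\xi\rangle\mathrm{trace}(A_\eta)+\langle a,\eta\rangle\mathrm{trace}(A_\xi)$. Subtracting $(1/n)$ times this from $\mathrm{div}(A_\eta a^\top)$, the term $a^\top(\mathrm{trace}(A_\eta))$ combines into $\tfrac{n-1}{n}a^\top(\mathrm{trace}(A_\eta))$, while the $\langle a,\xi\rangle$ and $\langle a,\eta\rangle$ terms reorganize into the two traceless combinations appearing in \eqref{030125A}; the terms $-\langle A_\eta a^\top,\alpha^\sharp\rangle$ and $\langle a^\top,\alpha^\sharp\rangle\mathrm{trace}(A_\eta)$ are untouched. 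Thus $\mathrm{div}(V)$ matches the integrand exactly, and the divergence theorem yields \eqref{030125A}.

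The only real subtlety I anticipate is the bookkeeping when applying Codazzi, specifically making sure the one-form $\alpha$ contributions $-\alpha(e_i)A_\eta(a^\top)+\alpha(a^\top)A_\eta(e_i)$ are traced correctly so that $\alpha^\sharp$ appears with the right pairing; no special hypotheses such as $\nabla^\perp \mathbf{H}=0$ are needed for this formula, and the compactness of $M^n$ enters only to kill the divergence after integration.
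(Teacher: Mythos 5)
Your proposal is correct and follows essentially the same route as the paper: it identifies the same vector field $A_{\eta}a^{\top}-\tfrac{1}{n}\,\mathrm{trace}(A_{\eta})\,a^{\top}$, computes its divergence using \eqref{2505}, the Codazzi equation \eqref{Codazzi_3} and the Beltrami-type identity $\mathrm{div}(a^{\top})=\langle a,\xi\rangle\mathrm{trace}(A_{\eta})+\langle a,\eta\rangle\mathrm{trace}(A_{\xi})$, and then integrates over the compact manifold. All the bookkeeping with $\alpha^{\sharp}$ checks out, and you are right that no hypothesis on $\mathbf{H}$ is needed here.
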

\begin{proof}
Directly from (\ref{2505}), we get the following formula for the divergence of the vector field $a^{\top}$,
\begin{equation}\label{2505B}
\mathrm{div}(a^{\top})=\langle a, \xi \rangle \mathrm{trace}(A_{\eta})+\langle a, \eta\rangle \mathrm{trace}(A_{\xi}).
\end{equation}
Indeed, the previous formula is an equivalent way to write Beltrami's equation $\triangle \langle a, \psi \rangle =n \langle a, \mathbf{H}\rangle$, taking into account (\ref{curme}) and (\ref{gradiente}).

\vspace{1mm}

Now, fix a point $p\in M^n$ and a local orthonormal frame $(E_{1}, \cdots, E_{n})$ in $M^n$ around $p$ with $\nabla E_{i}=0$ at $p$. Using  the Codazzi equation (\ref{Codazzi_2}) and formula (\ref{2505}), a direct computation shows that, at the point $p\in M^n$, we have 
\begin{align*}
    \mathrm{div}(A_{\eta}a^{\top})&=\sum_{i=1}^{n}\langle \nabla_{E_{i}}A_{\eta}a^{\top}, E_{i}\rangle
\\ &=\sum_{i=1}^{n}\big\langle (\nabla_{E_{i}}A_{\eta})a^{\top}+A_{\eta}(\nabla_{E_{i}}a^{\top}), E_{i}\big\rangle \\
&=\sum_{i=1}^{n}\big\langle (\nabla_{a^{\top}}A_{\eta})E_{i}-\alpha(E_{i})A_{\eta}(a^{\top})+\alpha(a^{\top})A_{\eta}(E_{i})+A_{\eta}(\nabla_{E_{i}}a^{\top}), E_{i}\big\rangle \\
&
=\sum_{i=1}^{n}\big\langle \nabla_{a^{\top}}(A_{\eta}E_{i})+A_{\eta}(\nabla_{E_{i}}a^{\top}), E_{i}\big\rangle -\langle A_{\eta}a^{\top} ,\alpha^{\sharp} \rangle +\alpha(a^{\top})\mathrm{trace}(A_{\eta}) \\
&
=a^{\top}(\mathrm{trace}(A_{\eta}))-\langle A_{\eta}a^{\top},\alpha^{\sharp}\rangle + \langle a^{\top}, \alpha^{\sharp} \rangle \,\mathrm{trace}(A_{\eta}) \\[2mm] & + \langle a, \xi\rangle \,\mathrm{trace}(A^2_{\eta})+\langle a, \eta\rangle \,\mathrm{trace}(A_{\xi}A_{\eta}).
\end{align*}

\noindent Using now the previous formula and (\ref{2505B}), we obtain
\begin{align*}
\mathrm{div}\Big(A_{\eta}a^{\top}-\frac{1}{n}\,\mathrm{trace}(A_{\eta}) a^{\top}\Big) 
&=\frac{n-1}{n}a^{\top}(\mathrm{trace}(A_{\eta}))\\& +  \langle a, \xi\rangle\Big(\mathrm{trace}(A^{2}_{\eta})-\frac{1}{n}\,(\mathrm{trace}(A_{\eta}))^2\Big)\\ &+\langle a, \eta\rangle \Big(\mathrm{trace}(A_{\xi} A_{\eta})-\frac{1}{n}\,\mathrm{trace}(A_{\xi})\,\mathrm{trace}(A_{\eta})\Big)\\[2mm] &-\langle A_{\eta}a^{\top},\alpha^{\sharp}\rangle + \langle a^{\top}, \alpha^{\sharp} \rangle \,\mathrm{trace}(A_{\eta}).
\end{align*}
The result then follows by integrating the previous formula over $M^n$.
\end{proof}

\begin{remark}{\rm  In Proposition \ref{2605C}, the roles of $\xi$ and $\eta$ can be interchanged. Indeed, applying the same argument as before yields another integral formula analogous to (\ref{030125A}):
$$
\int_{M^n}\Big[\frac{n-1}{n}\,a^{\top}(\mathrm{trace}(A_{\xi}))+ \langle a, \eta\rangle\Big(\mathrm{trace}(A^{2}_{\xi})-\frac{1}{n}\,(\mathrm{trace}(A_{\xi}))^2\Big)
$$
$$
 +\langle a, \xi\rangle \Big(\mathrm{trace}(A_{\eta} A_{\xi})-\frac{1}{n}\,\mathrm{trace}(A_{\eta})\mathrm{trace}(A_{\xi})\Big)
$$   
\begin{equation}
\hspace*{-6mm}+\langle A_{\xi}a^{\top},\alpha^{\sharp}\rangle - \langle a^{\top}, \alpha^{\sharp} \rangle \,\mathrm{trace}(A_{\xi})\Big]\,dV_g=0.
\end{equation}
}
\end{remark}

\begin{corollary}\label{111024a}
Let $\psi: M^{n}\rightarrow \L^{n+2}$ be a compact spacelike submanifold such that $\nabla^{\perp} \mathbf{H}=0$. Suppose $\xi_0, \xi_1 \in \mathfrak{X}^{\perp}(M^n)$, such that $\langle \xi_0, \xi_0\rangle=\langle \xi_1, \xi_1\rangle =0$ and $\langle \xi_0, \xi_1\rangle =1$ are taken as in Corollary $\ref{coro1}$. For each $a\in \L^{n+2}$, the following integral formulas hold
$$
\int_{M^n}\Big[\langle a, \xi_i\rangle\Big(\mathrm{trace}(A^{2}_{\xi_{i+1}})-\frac{1}{n}\,(\mathrm{trace}(A_{\xi_{i+1}}))^2\Big)
$$
\begin{equation}\label{111024b}
 +\langle a, \xi_{i+1}\rangle \Big(\mathrm{trace}(A_{\xi_{i+1}} A_{\xi_{i}})-\frac{1}{n}\,\mathrm{trace}(A_{\xi_{i+1}})\mathrm{trace}(A_{\xi_{i}})\Big)\Big]\,dV_g=0\,,
\end{equation}   
where $i\in \mathbb{Z}_2$. Therefore, if $\xi_{i}$ is umbilical for some $i\in \mathbb{Z}_{2}$, then $\psi$ is totally umbilical. 
\end{corollary}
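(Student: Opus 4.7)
The plan is to apply Proposition \ref{2605C}, together with its interchange variant given in the Remark immediately following it, to the specific lightlike normal frame $(\xi_0,\xi_1)$ provided by Corollary \ref{coro1}. The decisive feature of that frame is twofold: first, $\nabla^{\perp}\xi_0=\nabla^{\perp}\xi_1=0$, so the associated one-form vanishes identically and $\alpha^{\sharp}=0$; second, both $\mathrm{trace}(A_{\xi_0})$ and $\mathrm{trace}(A_{\xi_1})$ are (nonzero) constants. Plugging these facts into (\ref{030125A}) with $(\xi,\eta)=(\xi_0,\xi_1)$ kills three of its five terms: the gradient term $\tfrac{n-1}{n}a^{\top}(\mathrm{trace}(A_{\xi_1}))$ vanishes because $\mathrm{trace}(A_{\xi_1})$ is constant, and the two terms involving $\alpha^{\sharp}$ vanish trivially. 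What remains is exactly (\ref{111024b}) for $i=0$; the case $i=1$ follows identically from the formula in the Remark, obtained by swapping the roles of $\xi$ and $\eta$.

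For the umbilicity conclusion, I would fix $i\in\mathbb{Z}_2$ and assume $A_{\xi_i}=\lambda\,\mathrm{Id}$ for some $\lambda\in C^{\infty}(M^n)$; since $\mathrm{trace}(A_{\xi_i})=n\lambda$ is constant by Corollary \ref{coro1}, $\lambda$ is in fact a constant. Substituting this into (\ref{111024b}) (using the index permutation that places $A_{\xi_i}$ in the product term), a direct computation gives
\begin{equation*}
\mathrm{trace}(A_{\xi_{i+1}}A_{\xi_i})-\tfrac{1}{n}\mathrm{trace}(A_{\xi_{i+1}})\mathrm{trace}(A_{\xi_i})=\lambda\,\mathrm{trace}(A_{\xi_{i+1}})-\lambda\,\mathrm{trace}(A_{\xi_{i+1}})=0,
\end{equation*}
so the integrand collapses and the identity reduces to
\begin{equation*}
\int_{M^n}\langle a,\xi_i\rangle\!\left(\mathrm{trace}(A_{\xi_{i+1}}^{2})-\tfrac{1}{n}(\mathrm{trace}(A_{\xi_{i+1}}))^{2}\right)dV_g=0
\end{equation*}
for every $a\in\L^{n+2}$.

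To finish, I would choose $a\in\L^{n+2}$ a unit timelike vector lying in the causal cone opposite to the one containing $\xi_i$. This is possible because $\xi_i$ is a nowhere-zero, continuous lightlike vector field on the connected manifold $M^n$, hence it stays entirely in one causal cone; consequently $\langle a,\xi_i\rangle$ is strictly positive (or, symmetrically, strictly negative) on all of $M^n$. The remaining factor equals $\|A_{\xi_{i+1}}-\tfrac{1}{n}\mathrm{trace}(A_{\xi_{i+1}})\,\mathrm{Id}\|^{2}\geq 0$, so the integrand has constant sign and its vanishing forces the trace-free part of $A_{\xi_{i+1}}$ to be zero. Hence $\xi_{i+1}$ is also umbilical, and (\ref{segunda_forma_fundamental}) then yields that $\mathrm{II}$ itself is pure trace, i.e., $\psi$ is totally umbilical. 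I expect the main (and essentially the only) subtle point to be the justification of the sign-definite choice of $a$; everything else is algebraic manipulation of the already-established integral formula.
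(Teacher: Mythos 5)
Your proposal is correct and follows essentially the same route as the paper: specialize Proposition \ref{2605C} (and its interchanged variant) to the frame of Corollary \ref{coro1}, where $\alpha^{\sharp}=0$ and the traces are constant, and then, assuming $\xi_i$ umbilical, kill the cross term and use a timelike $a$ together with the nonnegativity of $\mathrm{trace}(A_{\xi_{i+1}}^2)-\tfrac{1}{n}(\mathrm{trace}(A_{\xi_{i+1}}))^2$ to force the trace-free part of $A_{\xi_{i+1}}$ to vanish. Your explicit justification that $\langle a,\xi_i\rangle$ is sign-definite (via connectedness and the causal cone of $\xi_i$) is a slightly more detailed version of the paper's remark that this function never vanishes for timelike $a$.
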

\begin{proof}
From Corollary \ref{coro1}, formula (\ref{030125A}) directly reduces to (\ref{111024b}). On the other hand, if $\xi_i$ is assumed to be umbilical, formula (\ref{111024b}) gives
$$
\int_{M^n}\langle a, \xi_i\rangle\Big(\mathrm{trace}(A^{2}_{\xi_{i+1}})-\frac{1}{n}\,(\mathrm{trace}(A_{\xi_{i+1}}))^2\Big)dV_g=0\,,
$$
for any vector $a\in \L ^{n+2}$. In particular, the vector $a$ can be chosen to be timelike, and then the function $\langle a, \xi_i\rangle$ never vanishes. Consequently, the previous formula implies that $\mathrm{trace}(A^{2}_{\xi_{i+1}})-\frac{1}{n}\,(\mathrm{trace}(A_{\xi_{i+1}}))^2=0$, ending the proof.
\end{proof}
For each submanifold $M^n$ in Euclidean space $\E^m$, $m>n$, its mean curvature vector field $\mathbf{H}$ and the scalar curvature $S$ of $M^n$ are related by the inequality $S\leq n(n-1)\langle \mathbf{H},\mathbf{H} \rangle$, and the equality holds if and only if $M^n$ is totally umbilical. In the case of an $n$-dimensional spacelike submanifold in $\L^{n+p}$, $p\geq 2$, this inequality does not hold, in general. However, we have,

\begin{proposition}\label{040125B}
  Let $\psi: M^{n}\rightarrow \L^{n+2}$ be a compact spacelike submanifold with $\nabla^{\perp} \mathbf{H}=0$.
  Then, for every $a\in \L^{n+2}$ timelike with $\langle a,\xi_{0}\rangle >0$, where $\xi_{0}$ and $\xi_1$ are taken as in Corollary $\ref{coro1}$, the following inequality holds
\begin{equation}\label{040125A}
    \int_{M^n}\langle a, \xi_{0}-\xi_{1}\rangle \Big(n(n-1)\langle \mathbf{H},\mathbf{H} \rangle- S\Big)\,dV_g \, \geq 0\,.
\end{equation}
The equality holds if and only if  $M^n$ is a totally umbilical round sphere with constant sectional curvature $\langle \mathbf{H},\mathbf{H} \rangle >0$, and contained in a spacelike hyperplane of $\L^{n+2}$.
\end{proposition}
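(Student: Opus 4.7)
\emph{Setup and reduction.} The plan is to combine the two integral identities provided by Corollary \ref{111024a} with an algebraic rewriting of the integrand on the left of (\ref{040125A}) in terms of the shape operators. Using (\ref{squared_lenght_H}), (\ref{squared_lenght_second_fundamental_form}) and (\ref{101a}), one obtains
$$
n(n-1)\langle \mathbf{H},\mathbf{H}\rangle - S \;=\; \langle \mathrm{II},\mathrm{II}\rangle - n\langle \mathbf{H},\mathbf{H}\rangle \;=\; 2\,\Psi,
$$
where $\Psi := \mathrm{trace}(A_{\xi_0} A_{\xi_1}) - \tfrac{1}{n}\mathrm{trace}(A_{\xi_0})\,\mathrm{trace}(A_{\xi_1})$. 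Setting also $\Phi_i := \mathrm{trace}(A_{\xi_i}^2) - \tfrac{1}{n}(\mathrm{trace}(A_{\xi_i}))^2$ for $i\in\mathbb{Z}_2$, subtracting the identity of Corollary \ref{111024a} for $i=0$ from the one for $i=1$ (so that the two $\Psi$-terms combine with coefficient $\langle a,\xi_0-\xi_1\rangle$) leads to
$$
\int_{M^n}\langle a,\xi_0-\xi_1\rangle\bigl(n(n-1)\langle \mathbf{H},\mathbf{H}\rangle - S\bigr)\,dV_g \;=\; 2\int_{M^n}\bigl[\langle a,\xi_0\rangle\,\Phi_1 \;-\; \langle a,\xi_1\rangle\,\Phi_0\bigr]\,dV_g.
$$

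\emph{Sign analysis and the inequality.} Since each $A_{\xi_i}$ is $g$-self-adjoint, Cauchy--Schwarz for symmetric endomorphisms yields $\Phi_i \geq 0$, with equality precisely when $A_{\xi_i}$ is a scalar multiple of the identity. To fix the sign of $\langle a,\xi_1\rangle$, decompose $a = a^{\top} + a^{\mathrm{N}}$ with $a^{\mathrm{N}} = \langle a,\xi_1\rangle\xi_0 + \langle a,\xi_0\rangle\xi_1$ by (\ref{normal_part}), so that $\langle a^{\mathrm{N}},a^{\mathrm{N}}\rangle = 2\langle a,\xi_0\rangle\langle a,\xi_1\rangle$. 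Since $\langle a,a\rangle < 0$ and $\langle a^{\top},a^{\top}\rangle \geq 0$, one obtains $\langle a^{\mathrm{N}},a^{\mathrm{N}}\rangle < 0$; combined with the hypothesis $\langle a,\xi_0\rangle>0$, this forces $\langle a,\xi_1\rangle<0$ on all of $M^n$. Hence $\langle a,\xi_0\rangle\Phi_1 \geq 0$ and $-\langle a,\xi_1\rangle\Phi_0 \geq 0$ pointwise, which proves (\ref{040125A}).

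\emph{Equality case --- the main obstacle.} The harder step is extracting geometric rigidity from equality. Equality in (\ref{040125A}) forces both $\langle a,\xi_0\rangle\Phi_1$ and $-\langle a,\xi_1\rangle\Phi_0$ to vanish identically; since the coefficients $\langle a,\xi_0\rangle$ and $\langle a,\xi_1\rangle$ never vanish, this gives $\Phi_0 \equiv \Phi_1 \equiv 0$. Thus both $A_{\xi_0}$ and $A_{\xi_1}$ are scalar multiples of the identity, and (\ref{segunda_forma_fundamental}) yields $\mathrm{II}(X,Y) = \langle X,Y\rangle\,\mathbf{H}$, i.e., $\psi$ is totally umbilical. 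Moreover $\nabla^{\perp}\mathbf{H}=0$ makes $\langle \mathbf{H},\mathbf{H}\rangle$ constant, and it is positive by Remark \ref{1908B}. The strategy from here is to produce a parallel timelike direction normal to $\psi$: set $e:=\mathbf{H}/\|\mathbf{H}\|$, a unit spacelike parallel normal, and let $\nu\in\mathfrak{X}^{\perp}(M^n)$ be the unit timelike normal orthogonal to $e$, globally defined by the orientability argument of Section 2. Total umbilicity and $\langle \mathbf{H},\nu\rangle =0$ give $A_{\nu}=0$, while $\nabla^{\perp}\nu = 0$ follows from $\langle \nabla^{\perp}_X\nu,\nu\rangle = 0$ and $\langle \nabla^{\perp}_X\nu,\mathbf{H}\rangle = -\langle \nu,\nabla^{\perp}_X\mathbf{H}\rangle = 0$, together with the fact that $\{e,\nu\}$ spans the normal bundle. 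The Weingarten formula then gives $\overline{\nabla}\nu=0$, so $\nu$ is a constant timelike vector of $\L^{n+2}$, and $\psi(M^n)$ lies in a spacelike affine hyperplane $\{x\in \L^{n+2}:\langle x,\nu\rangle = c\}$. Inside this Euclidean hyperplane, $\psi$ is a compact totally umbilical hypersurface with mean curvature $\|\mathbf{H}\|$, hence by the classical Euclidean classification a round sphere of radius $1/\|\mathbf{H}\|$, of constant sectional curvature $\langle \mathbf{H},\mathbf{H}\rangle$. The converse is immediate: for such a sphere, $A_{\xi_0}$ and $A_{\xi_1}$ are multiples of the identity, $\Psi\equiv 0$, and both sides of (\ref{040125A}) vanish.
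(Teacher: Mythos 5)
Your proof is correct and follows essentially the same route as the paper: the same identity $n(n-1)\langle\mathbf{H},\mathbf{H}\rangle-S=2\Psi$, the same combination of the two integral formulas from Corollary \ref{111024a}, the same sign analysis giving $\langle a,\xi_1\rangle<0$ and $\Phi_i\ge 0$, and the same reduction of equality to $\Phi_0\equiv\Phi_1\equiv 0$. Your treatment of the equality case via the parallel unit timelike normal $\nu$ orthogonal to $\mathbf{H}$ is just a normalized version of the paper's timelike vector $\lambda_1\xi_0-\lambda_0\xi_1$, so the two arguments coincide in substance.
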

\begin{proof}
    Taking into account (\ref{squared_lenght_H}) and (\ref{101}), we can write
    \begin{equation}\label{140125A}
    \mathrm{trace}(A_{\xi_{0}} A_{\xi_{1}})-\frac{1}{n}\,\mathrm{trace}(A_{\xi_{0}})\mathrm{trace}(A_{\xi_{1}})=\frac{1}{2}\,\big(n(n-1)\langle \mathbf{H},\mathbf{H}\rangle - S\big).    
    \end{equation}
    Therefore, Corollary \ref{111024a} implies that
    $$
 \int_{M^n}\langle a, \xi_{0}\rangle \Big(n(n-1)\langle \mathbf{H},\mathbf{H} \rangle- S\Big)\,dV_g =-2\int_{M^n} \langle a, \xi_1\rangle\Big(\mathrm{trace}(A^{2}_{\xi_{0}})-\frac{1}{n}\,(\mathrm{trace}(A_{\xi_{0}}))^2\Big)dV_g\geq 0\,,
 $$
 where we have used that $\langle a, \xi_{1}\rangle <0$ holds because $\langle \xi_0,\xi_1\rangle = 1>0$. Similarly we get
 $$
\int_{M^n}\langle a, -\xi_{1}\rangle \Big(n(n-1)\langle \mathbf{H},\mathbf{H} \rangle- S\Big)\,dV_g =2\int_{M^n} \langle a, \xi_0 \rangle\Big(\mathrm{trace}(A^{2}_{\xi_{1}})-\frac{1}{n}\,(\mathrm{trace}(A_{\xi_{1}}))^2\Big)dV_g\geq 0\,.
$$
This concludes the proof of the inequality. Now, the equality holds in (\ref{040125A}) if and only if 
\begin{equation}\label{Z_2}
\int_{M^n}\langle a, \xi_i\rangle\Big(\mathrm{trace}(A^{2}_{\xi_{i+1}})-\frac{1}{n}\,\mathrm{trace}(A_{\xi_{i+1}})^2\Big)dV_g=0, \quad i\in \Z_{2},
\end{equation}
that is, if and only if $\psi: M^{n}\rightarrow \L^{n+2}$ is totally umbilical. 

\vspace{1mm}

If $\psi$ is totally umbilical then $n(n-1)\langle \mathbf{H},\mathbf{H} \rangle= S$ holds without any additional assumptions.
Indeed, from the umbilicity condition, we have $\langle \Pi,\Pi \rangle =n\langle \mathbf{H}, \mathbf{H} \rangle$, which, when substituted in (\ref{101a}), gives the announced equality (alternatively, this can be also deduced directly from (\ref{140125A})). On the other hand, if the equality $n(n-1)\langle \mathbf{H},\mathbf{H} \rangle= S$ holds, then under our assumptions we arrive again to (\ref{Z_2}), and thus $\psi$ is totally umbilical. 

\vspace{1mm}

In addition, note that the constant $\langle \mathbf{H},\mathbf{H} \rangle$ must be positive (see \cite[Prop. 4.1]{AER}). From (\ref{1}) we have that the sectional curvature of $M^n$ is constant and equals $\langle \mathbf{H},\mathbf{H} \rangle >0$. Finally, if $A_{\xi_{0}}=\lambda_{0}\mathrm{Id}$ and $A_{\xi_{1}}=\lambda_{1}\mathrm{Id}$, $\lambda_{0},\lambda_{1}\in \mathbb{R}$, then (\ref{squared_lenght_H}) implies that the vector $\lambda_{1}\xi_{0}-\lambda_{0}\xi_{1}$ is timelike. Thus,
$\psi(M^n)$ is contained in an affine spacelike hyperplane $H$ with normal vector $\lambda_{1}\xi_{0}-\lambda_{0}\xi_{1}$, (alternatively see, for instance, \cite[Prop. 4.3]{Da}). Thus $\psi(M^n)$ is a round sphere of radius $1/\sqrt{\langle \mathbf{H},\mathbf{H} \rangle }$ in $H \cong \E^{n+1}$. The converse is straightforward.
\end{proof}

\begin{remark}\label{Wente} {\rm There exist codimension two compact spacelike submanifolds in Lorentz-Minkowski spacetime with parallel mean curvature vector field that are not totally umbilical round spheres. For instance, the well-known Wente torus in $\E^3$, \cite{Wente}, naturally gives rise to an immersed compact spacelike surface in $\L^4$ with parallel mean curvature vector field. However, the only topological $2$-spheres that can be immersed as spacelike surfaces in $\L^4$ with parallel (necessarily spacelike) mean curvature vector field are the totally umbilical round spheres lying in a spacelike affine hyperplane of $\L^4$, \cite[Cor. 3.4]{AER}.
}
\end{remark}

\section{Main results}

\noindent We begin this section with the following immediate consequence of Proposition \ref{040125B},

\begin{corollary}
  Let $\psi: M^{n}\rightarrow \L^{n+2}$ be a compact spacelike submanifold with $\nabla^{\perp} \mathbf{H}=0$.
  Assume $S\geq n(n-1)\langle \mathbf{H},\mathbf{H} \rangle$, then $M^n$ is a totally umbilical round sphere with constant sectional curvature $\langle \mathbf{H},\mathbf{H} \rangle >0$, and contained in a spacelike hyperplane of $\L^{n+2}$.
\end{corollary}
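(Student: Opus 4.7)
The plan is to read this corollary as the direct application of Proposition \ref{040125B} once one controls the sign of the integrand. With the hypothesis $S\geq n(n-1)\langle \mathbf{H},\mathbf{H}\rangle$, the factor $n(n-1)\langle \mathbf{H},\mathbf{H}\rangle - S$ in (\ref{040125A}) is pointwise non-positive. So the whole strategy reduces to showing that the remaining factor $\langle a,\xi_0-\xi_1\rangle$ can be made pointwise non-negative (in fact positive) by a suitable choice of timelike $a$, and then concluding that both sides of (\ref{040125A}) must vanish.

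First I would verify the sign of $\langle a,\xi_0-\xi_1\rangle$. The lightlike normal frame $(\xi_0,\xi_1)$ supplied by Corollary \ref{coro1} satisfies $\langle \xi_0,\xi_1\rangle=1>0$, so $\xi_0$ and $\xi_1$ lie in opposite time cones of the normal plane along $M^n$. Consequently, for a fixed timelike $a\in \L^{n+2}$ with $\langle a,\xi_0\rangle>0$ everywhere on $M^n$, one automatically has $\langle a,\xi_1\rangle<0$ everywhere (this is exactly the inequality used in the proof of Proposition \ref{040125B}), and therefore $\langle a,\xi_0-\xi_1\rangle>0$ on $M^n$.

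Combining these two signs, the integrand in (\ref{040125A}) is non-positive on $M^n$, while Proposition \ref{040125B} guarantees that its integral is non-negative. Hence the integrand vanishes identically, which, since $\langle a,\xi_0-\xi_1\rangle$ is strictly positive, forces
\[
S = n(n-1)\langle \mathbf{H},\mathbf{H}\rangle \quad \text{on } M^n.
\]
This places us exactly in the equality case of Proposition \ref{040125B}. That equality case has already been shown to imply that $M^n$ is a totally umbilical round sphere of constant sectional curvature $\langle \mathbf{H},\mathbf{H}\rangle>0$ contained in a spacelike hyperplane of $\L^{n+2}$, which is the desired conclusion.

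There is essentially no obstacle here: the corollary is a pure sign-chase on top of Proposition \ref{040125B}. The only small point that must be handled carefully is the existence of a timelike vector $a$ with $\langle a,\xi_0\rangle>0$ at every point of $M^n$; this is routine, since the set of such $a$ is non-empty (any timelike vector in the time cone opposite to that of the future-directed $\xi_0$ works), and the argument above does not require a global time-orientation on $M^n$ beyond what Corollary \ref{coro1} already provides.
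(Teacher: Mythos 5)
Your argument is correct and is exactly the reasoning the paper intends: the corollary is stated as an immediate consequence of Proposition \ref{040125B}, and your sign analysis (the hypothesis makes the integrand pointwise non-positive while the proposition makes the integral non-negative, forcing the equality case) is the right way to make that explicit. The auxiliary facts you use --- that $\langle a,\xi_1\rangle<0$ whenever $\langle a,\xi_0\rangle>0$ and $\langle\xi_0,\xi_1\rangle=1$, and that a timelike $a$ with $\langle a,\xi_0\rangle>0$ everywhere exists since the globally defined lightlike field $\xi_0$ has constant time-orientation on the connected manifold $M^n$ --- are both verified correctly.
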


\begin{remark}\label{wrong_inequality}{\rm For any submanifold $M^n$ in the Euclidean space $\E^m$, $m>n$, we have $S=n(n-1)\langle \mathbf{H},\mathbf{H}\rangle +n\langle \mathbf{H},\mathbf{H}\rangle-\langle \Pi,\Pi \rangle \leq n(n-1)\langle \mathbf{H},\mathbf{H}\rangle$, because 
$n\langle \mathbf{H},\mathbf{H}\rangle\leq \langle \Pi,\Pi \rangle$ by the Schwarz inequality for self-adjoint operators of a Euclidean vector space, and the equality holds if and only if $M^n$ is totally umbilical. Obviously, there are serious algebraic reasons that prevent an extension of this fact for spacelike submanifolds $M^n$ in $\mathbb{L}^{n+2}$, $p\geq 2$. 
}
\end{remark}

As a direct consequence of Proposition \ref{040125B} and formula (\ref{2805A}) we get,
\begin{theorem}\label{1106A}
Let $\psi : M^{n}\rightarrow \L^{n+2}$ be a compact spacelike submanifold such that 
$\psi(M^n)\subset \Lambda^{n+1}_+$, with parallel mean curvature vector field $\mathbf{H}$. Then $M^n$ is a totally umbilical round sphere obtained as $\psi(M^n)=\S^n(v,r)$, where $v\in \mathbb{L}^{n+2}$ satisfies $\langle v,v \rangle =-1$, $v_0<0$ and $1/r^2=\langle \mathbf{H},\mathbf{H} \rangle$ is the sectional curvature of $M^n$. 
\end{theorem}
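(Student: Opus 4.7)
The plan is to observe that, under the hypothesis $\psi(M^n)\subset\Lambda^{n+1}_+$, the intrinsic and extrinsic data are tightly coupled by formula (\ref{2805A}), which gives $S = n(n-1)\langle \mathbf{H},\mathbf{H}\rangle$ pointwise on $M^n$. Together with the hypothesis $\nabla^{\perp}\mathbf{H}=0$, this places us immediately in the equality case of Proposition \ref{040125B}: the integrand $\langle a,\xi_0-\xi_1\rangle\bigl(n(n-1)\langle\mathbf{H},\mathbf{H}\rangle - S\bigr)$ vanishes identically on $M^n$, so its integral is zero. Therefore the whole proof reduces to extracting the conclusion of the equality case and then identifying the resulting object within the family $\S^n(v,r)$ of Example \ref{examples}.

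Invoking the equality statement of Proposition \ref{040125B} directly yields three facts at once: $\psi$ is totally umbilical; the induced metric on $M^n$ has constant positive sectional curvature equal to $\langle\mathbf{H},\mathbf{H}\rangle$; and $\psi(M^n)$ is contained in an affine spacelike hyperplane $H\subset\L^{n+2}$. Writing $H$ as the level set of a linear form $\langle v,\cdot\rangle = r$ for some timelike vector $v$ (which we normalise to $\langle v,v\rangle=-1$, with the sign choice $v_0<0$ imposed by $\psi(M^n)\subset\Lambda^{n+1}_+$, i.e.\ $\psi_0>0$), we obtain $r>0$ and the inclusion $\psi(M^n)\subseteq H\cap\Lambda^{n+1}_+ = \S^n(v,r)$. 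Compactness of $M^n$ and of $\S^n(v,r)$, together with the fact that $\psi$ is an embedding onto a closed codimension-two submanifold of $\L^{n+2}$ contained in the $n$-dimensional manifold $\S^n(v,r)$, forces the equality $\psi(M^n)=\S^n(v,r)$.

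Finally, to fix the radius, recall that Example \ref{examples} computes the sectional curvature of $\S^n(v,r)$ to be $1/r^2$; matching this with the sectional curvature $\langle\mathbf{H},\mathbf{H}\rangle$ produced by the equality case gives $1/r^2=\langle\mathbf{H},\mathbf{H}\rangle$, completing the identification.

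I do not anticipate any real obstacle here: all the substantive work has been done in Section 4, where Proposition \ref{2605C}, Corollary \ref{111024a} and the delicate choice of the parallel lightlike frame of Corollary \ref{coro1} are used to obtain Proposition \ref{040125B}. The only point that requires a brief argument is the passage from the abstract conclusion \emph{``$\psi(M^n)$ lies in a spacelike hyperplane''} to the explicit form $\S^n(v,r)$; this is handled purely by intersecting with the light cone and invoking compactness, as indicated above.
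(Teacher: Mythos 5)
Your proposal is correct and follows essentially the same route as the paper: formula (\ref{2805A}) gives $S=n(n-1)\langle\mathbf{H},\mathbf{H}\rangle$ pointwise on the light cone, forcing the equality case of Proposition \ref{040125B}, whose conclusion is then matched with the classification of totally umbilical compact spacelike submanifolds in $\Lambda^{n+1}_+$ as the spheres $\S^n(v,r)$ of Example \ref{examples}. The paper states the theorem as an immediate consequence of these two facts, so your slightly more explicit identification of the hyperplane section with $\S^n(v,r)$ is just a fleshed-out version of the same argument.
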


In the compact case, Proposition \ref{equivalence2} can be sharpened,

\begin{corollary}\label{equivalence3}
 Let $\psi : M^{n}\rightarrow \L^{n+2}$ be a compact spacelike submanifold such that 
$\psi(M^n)\subset \Lambda^{n+1}_+$. Then, the following assertions are equivalent:
 \begin{enumerate}
     \item The mean curvature vector field of $\psi$ is parallel,
     \item The second fundamental form of $\psi$ is parallel,
     \item The scalar curvature of $M^n$ is constant,
     \item The sectional curvature of $M^n$ is constant,  
     \item $M^n$ is a totally umbilical round sphere.
 \end{enumerate}
\end{corollary}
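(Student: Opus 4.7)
The plan is to use Theorem \ref{1106A} as the central hub and route all equivalences through assertion (5). Since (5) is the most geometrically rigid condition, once we know it follows from (1) we essentially only need to verify that each remaining condition (2), (3), (4) is squeezed between (1) and (5).

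First I would observe that $(1)\Leftrightarrow (3)$ is already contained in Proposition \ref{equivalence2}, so that equivalence is free. Next, Theorem \ref{1106A} gives $(1)\Rightarrow (5)$. The route back $(5)\Rightarrow (1)$ is provided by Example \ref{examples}: on $\S^n(v,r)$ the mean curvature vector is $\mathbf{H}=-(1/2r^2)\xi-\eta$, and since $\nabla^{\perp}\xi=\nabla^{\perp}\eta=0$ for the lightlike frame constructed there, $\mathbf{H}$ is parallel. Together with what we already have, this closes the chain $(1)\Leftrightarrow (3)\Leftrightarrow (5)$.

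For $(4)$, the implication $(5)\Rightarrow (4)$ follows again from Example \ref{examples}, where the Gauss equation was used to verify that each $\S^n(v,r)$ has constant sectional curvature $1/r^2$. The converse $(4)\Rightarrow (3)$ is trivial since $S=n(n-1)K$ when the sectional curvature $K$ is constant, and constancy of $S$ puts us in assertion (3), already known to be equivalent to the others.

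The remaining implications involve $(2)$. For $(2)\Rightarrow (1)$ I would simply trace the Codazzi-type covariant derivative: $\widetilde{\nabla}\mathrm{II}=0$ forces $\nabla^{\perp}\mathbf{H}=(1/n)\,\mathrm{trace}_{\langle\,,\,\rangle}\widetilde{\nabla}\mathrm{II}=0$. Conversely, for $(5)\Rightarrow (2)$ I would use umbilicity to write $\mathrm{II}(X,Y)=\langle X,Y\rangle\mathbf{H}$, whence a direct computation gives
\begin{equation*}
(\widetilde{\nabla}_Z\mathrm{II})(X,Y)=\langle X,Y\rangle\,\nabla^{\perp}_Z\mathbf{H}
\end{equation*}
for any tangent vector fields $X,Y,Z$, and this vanishes because $\mathbf{H}$ is parallel on $\S^n(v,r)$ as noted above. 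This closes the cycle. No single step is truly an obstacle here; the only delicate point is making sure, when showing $(5)\Rightarrow (2)$, to invoke the parallelism of $\mathbf{H}$ on the totally umbilical sphere (which comes from the explicit structure of $\xi,\eta$ in Example \ref{examples}), rather than assuming it separately.
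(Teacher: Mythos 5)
Your proof is correct and takes essentially the same route the paper intends: the corollary is stated there without proof as a direct consequence of Theorem \ref{1106A}, Proposition \ref{equivalence2} and the explicit description of $\S^n(v,r)$ in Example \ref{examples}. All the implications you supply --- including the standard trace argument for $(2)\Rightarrow(1)$ and the computation $(\widetilde{\nabla}_Z\mathrm{II})(X,Y)=\langle X,Y\rangle\,\nabla^{\perp}_Z\mathbf{H}$ for $(5)\Rightarrow(2)$ --- are exactly the expected ones.
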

\begin{Counter-example}\label{counter-example}
{\rm {\bf (a)} Theorem \ref{1106A} cannot be extended to the case where $M^n$ is assumed to be complete and non-compact. To support this assertion, consider the isometric immersion 
\begin{equation}\label{260725B}
    \psi : \E^2\rightarrow \L^4, \quad \psi(x,y)=\big(\cosh x, \sinh x, \cos y, \sin y\big).
\end{equation}
Clearly, $\psi(\E^2)\subset \Lambda^{3}_+$, and hence $\langle \mathbf{H},\mathbf{H} \rangle=0$ from (\ref{2805A}). Now, Proposition \ref{equivalence2} does imply $\nabla^{\perp}\mathbf{H}=0$. On the other hand, a direct computation from (\ref{Weingarten_operators}) shows that
$A_{\eta}(\partial/\partial x)=\frac{1}{2}\, \partial/\partial x$ and  $A_{\eta}(\partial/\partial y)=-\frac{1}{2}\, \partial/\partial y.$
Thus, this complete spacelike surface is not totally umbilical in $\L^4$.

{\bf (b)} It is well known that every submanifold with parallel second fundamental form also has parallel mean curvature vector field, and that the converse is not true in general. However, Corollary \ref{equivalence3} establishes the equivalence between these two properties for compact $n$-dimensional spacelike submanifolds in $\Lambda^{n+1}_{+}$. This result cannot be weakened to the geodesically complete case. Namely, there exist complete spacelike submanifolds $M^n$ in $\Lambda^{n+1}$ with $\nabla^{\perp}\mathbf{H}=0$ but $\widetilde{\nabla}\Pi\neq 0$.
To construct a counter-example, observe that for a spacelike submanifold $\psi \colon M^n \to \Lambda^{n+1}_{+}\subset \mathbb{L}^{n+2}$, formula (\ref{segunda_forma_fundamental}) reduces in this case to 
$$
\mathrm{II}(X,Y)=\langle A_{\eta}X, Y \rangle\,\xi-\langle X, Y \rangle\,\eta,
$$
for every $X,Y\in \mathfrak{X}(M^{n})$. Therefore, taking into account that $\nabla^{\perp}\xi=\nabla^{\perp}\eta=0$, the condition  $\widetilde{\nabla}\Pi= 0$ is equivalent to $\nabla A_{\eta}=0.$

\vspace{1mm}

Now, for each $\sigma \in C^{\infty}(\R^2)$ depending only on $x$, consider the spacelike immersion $\psi_{\sigma}:=e^{\sigma}\psi$, where $\psi$ is as given in (\ref{260725B}). The corresponding induced metric is $g^{*}=e^{2\sigma}\langle\,,\, \rangle_{0}$, where $\langle\,,\rangle_{0}$ is the Euclidean metric on $\E^{2}$. The well-known formulas for the Levi-Civita connection of conformally related metrics \cite[1.159]{Besse} yield 
$$
\nabla^{*}_{\partial_{x}}\partial_{x}=\sigma_{x}\,\partial_{x},\quad \nabla^{*}_{\partial_{x}}\partial_{y}=\sigma_{x}\,\partial_{y},\quad\nabla^{*}_{\partial_{y}}\partial_{y}=-\sigma_{x}\,\partial_{x},
$$
where $\nabla^*$ the Levi-Civita connection of $g^*$. The Weingarten endomorphism $A_{\eta_{\sigma}}$ is computed in \cite[Sect. 4]{PaRo} giving its matrix
$$
\frac{1}{2}\begin{pmatrix} \sigma_x^2-2\sigma_{xx}-1 & 0 \\ 0 & -\sigma_x^2 +1\end{pmatrix},
$$
relative to the canonical coordinate basis. Now, we are now in position to compute
$$
\Big(\nabla^{*}_{\partial_{x}}A_{\eta_{\sigma}}\Big)(\partial_{y})=\nabla^{*}_{\partial_{x}}(A_{\eta_{\sigma}}(\partial_{y}))-A_{\eta_{\sigma}}(\nabla^{*}_{\partial_{x}}\partial_y)=-\sigma_{x}\sigma_{xx}\,\partial_{y}.
$$
Let us consider the upper half-plane $U=\{(x,y)\in \R^2: x>0\}$.
We can specialize the above formula for the case $$e^{\sigma(x)}=\frac{1}{x}, \quad x>0,$$ and then
$$
\Big(\nabla^{*}_{\partial_{x}}A_{\eta_{\sigma}}\Big)(\partial_{y})=\frac{1}{x^3}\partial_{y}.
$$
Therefore, for this choice of $\sigma$, the second fundamental form of $\psi_{\sigma} : U \rightarrow  \mathbb{L}^{n+2}$ is not parallel.
Finally, taking into account that the Gauss curvature of $g^{*}$ is 
$$
K^{*}=-\frac{\triangle^{0} \sigma}{e^{2\sigma}}=-\frac{\sigma_{xx}}{e^{2\sigma}}=-1,
$$
a direct application of Proposition \ref{equivalence2} shows that the mean curvature vector field of $\psi_{\sigma}$ must be parallel. Additionally, note that the induced metric $g^{*}$ from $\psi_{\sigma}$ on the half-plane $U$ coincides with the Poincaré metric on $U$, and therefore $(U,g^{*})$ is the hyperbolic plane, which is obviously geodesically complete.
}
\end{Counter-example}

Next, we will give several applications of Theorem \ref{1106A}. 
First, recall that there is an explicit one-to-one correspondence between $n$-dimensional spacelike submanifolds through the light cone $\Lambda^{n+1}_{+}(c)$ in $(n+2)$-dimensional De Sitter spacetime $\S^{n+2}_1(c)$ of (constant) sectional curvature $c>0$ (or through the light cone $\Lambda^{n+1}_{+}(-c)$ in $(n+2)$-dimensional anti De Sitter spacetime $\H^{n+2}_1(-c)$ of sectional curvature $-c$) and $n$-dimensional spacelike submanifolds in $\L^{n+2}$ through $\Lambda^{n+1}_{+}$ \cite[Prop. 3.2]{CFP}. This correspondence has an interesting property, namely: If an $n$-dimensional spacelike submanifold in $\Lambda^{n+1}_{+}(c)\subset \S^{n+2}_1(c)$ (or in $\Lambda^{n+1}_{+}(-c)\subset \H^{n+2}_1(-c)$) has parallel mean curvature vector field, then the mean curvature vector field of the corresponding spacelike submanifold in $\Lambda^{n+1}_{+} \subset \L^{n+2}$ has constant squared length \cite[Prop. 4.1]{CFP}, and therefore, it also has a parallel mean curvature vector field. Of course, the compactness of the spacelike submanifold is also preserved in this correspondence. Consequently, from Theorem \ref{1106A} we obtain,
\begin{corollary}
Let $\psi:M^{n}\rightarrow \S^{m+2}_1(c)$, $c>0$ $($resp. $\psi:M^{n}\rightarrow \H^{m+2}_1(-c)$$)$ be a compact spacelike submanifold through the light cone $\Lambda^{n+1}_{+}(c)$ $($resp. $\Lambda^{n+1}_{+}(-c)$$)$ with parallel mean curvature vector field. Then $M^n$ is totally umbilical.    
\end{corollary}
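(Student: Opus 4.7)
My plan is to reduce the corollary to Theorem \ref{1106A} by transferring the problem to the Minkowski light cone via the correspondence of \cite[Prop. 3.2]{CFP}. I will work out the case $\psi:M^{n}\to \Lambda^{n+1}_{+}(c)\subset \S^{n+2}_{1}(c)$ explicitly; the anti--de Sitter case is entirely analogous.

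First, I would apply \cite[Prop. 3.2]{CFP} to obtain the canonical compact spacelike immersion $\tilde\psi : M^{n}\to \Lambda^{n+1}_{+}\subset \L^{n+2}$ associated with $\psi$. Next, \cite[Prop. 4.1]{CFP} translates the hypothesis $\nabla^{\perp}\mathbf{H}=0$ on $\psi$ into the statement that $\langle \tilde{\mathbf{H}},\tilde{\mathbf{H}}\rangle$ is constant on $M^{n}$. Since $\tilde\psi$ factors through $\Lambda^{n+1}_{+}$, Proposition \ref{equivalence2} then promotes constancy of $\langle \tilde{\mathbf{H}},\tilde{\mathbf{H}}\rangle$ to $\nabla^{\perp}\tilde{\mathbf{H}}=0$. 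With this parallelism in hand, Theorem \ref{1106A} applies and forces $\tilde\psi(M^{n})$ to be a totally umbilical round sphere $\S^{n}(v,r)$ lying in a spacelike hyperplane of $\L^{n+2}$.

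The remaining step is to deduce total umbilicity of the original $\psi$ from total umbilicity of $\tilde\psi$, and I expect this to be the main obstacle. My approach is to exploit the explicit form of the correspondence in \cite[Prop. 3.2]{CFP}: the two immersions share the same base manifold, their ambient causal structures are related pointwise along the light cones in a controlled way, and their distinguished lightlike normal fields (the position vectors on the respective cones) differ only by a positive scalar. This should let me relate the Weingarten operators $A_{\eta}$ of $\tilde\psi$ and $A_{\tilde\eta}$ of $\psi$ up to a scalar factor and an additive multiple of the identity, so that umbilicity on the Minkowski side transfers to umbilicity on the de Sitter (resp. anti--de Sitter) side. Invoking the evident analogue of Proposition \ref{equivalence1} for spacelike submanifolds through $\Lambda^{n+1}_{+}(c)\subset \S^{n+2}_{1}(c)$ (resp. $\Lambda^{n+1}_{+}(-c)\subset \H^{n+2}_{1}(-c)$), which characterizes total umbilicity by umbilicity of the position-vector normal, then concludes the argument. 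Everything else in the proof is a formal chain of citations; the short direct check of how shape operators transform under the CFP correspondence is the only non-routine ingredient.
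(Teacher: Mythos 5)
Your proposal follows exactly the paper's route: the CFP correspondence transfers the problem to the Minkowski light cone, \cite[Prop.~4.1]{CFP} gives constancy of the squared mean curvature there, Proposition~\ref{equivalence2} upgrades this to parallelism, and Theorem~\ref{1106A} yields total umbilicity, which the correspondence carries back. The paper leaves the final transfer of umbilicity implicit (relying on \cite{CFP}), whereas you rightly flag it and sketch the shape-operator comparison; otherwise the two arguments coincide.
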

Secondly, we have the following rigidity result,

\begin{corollary}\label{rigidity}
Let $\psi:\S^{n}\rightarrow \L^{m+2}$ be an isometric immersion of the unit sphere $\S^n$ with its canonical metric that satisfies $\psi(\S^n)\subset \Lambda^{n+1}_{+}$. Then, there exists a rigid motion $F$ of $\L^{n+2}$, that preserves $\Lambda^{n+1}_{+}$, such that $\psi =F \circ i$ where $i : \S^n \rightarrow \L^{n+2}$, $i(x)=(1,x)$, $x\in\S^n$, is the natural isometric embedding of $\S^n$ through $\Lambda^{n+1}_{+}$ in $\L^{n+2}$.
\end{corollary}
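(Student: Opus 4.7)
The strategy is to combine Theorem~\ref{1106A}, which pins down the image $\psi(\S^n)$ as one of the model spheres $\S^n(v,r)$, with the transitivity of the orthochronous Lorentz group on the past unit hyperboloid. First, since $\psi$ is isometric and $(\S^n,g_{0})$ has constant scalar curvature $n(n-1)$, Proposition~\ref{equivalence2} yields $\nabla^{\perp}\mathbf{H}=0$. Theorem~\ref{1106A} then gives $\psi(\S^n)=\S^n(v,r)$ for some $v\in\L^{n+2}$ with $\langle v,v\rangle=-1$ and $v_{0}<0$, and with $1/r^{2}=\langle\mathbf{H},\mathbf{H}\rangle$ equal to the sectional curvature of the domain; hence $r=1$.

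Next, a direct computation shows that the natural embedding $i(x)=(1,x)$ satisfies $i(\S^n)=\S^n(\tilde v,1)$, where $\tilde v=(-1,0,\ldots,0)$. Since $O^{+}(1,n+1)$ acts transitively on $\{w\in\L^{n+2}:\langle w,w\rangle=-1,\ w_{0}<0\}$, I would choose $F_{1}\in O^{+}(1,n+1)$ with $F_{1}(\tilde v)=v$. Because $F_{1}$ is a linear isometry preserving the future light cone, it maps $\S^n(\tilde v,1)$ bijectively onto $\S^n(v,1)=\psi(\S^n)$; in particular $F_{1}$ is a rigid motion of $\L^{n+2}$ that preserves $\Lambda_{+}^{n+1}$.

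Finally, I need to pass from the set equality $(F_{1}\circ i)(\S^n)=\psi(\S^n)$ to the pointwise equality $\psi=F\circ i$ required by the statement. Both $F_{1}\circ i$ and $\psi$ are isometric immersions of the compact, simply connected manifold $(\S^n,g_{0})$ onto the round $n$-sphere $\S^n(v,1)$; each is a local diffeomorphism onto a manifold diffeomorphic to $\S^n$, hence a global diffeomorphism onto its image. Therefore $\sigma:=(F_{1}\circ i)^{-1}\circ\psi$ is a self-isometry of $(\S^n,g_{0})$, so $\sigma\in O(n+1)$. I lift $\sigma$ to $\tilde\sigma\in O^{+}(1,n+1)$ by acting trivially on the $x_{0}$-coordinate, i.e.\ $\tilde\sigma(x_{0},x_{1},\ldots,x_{n+1})=(x_{0},\sigma(x_{1},\ldots,x_{n+1}))$. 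A direct check gives $i\circ\sigma=\tilde\sigma\circ i$, so $F:=F_{1}\circ\tilde\sigma$ lies in $O^{+}(1,n+1)$, preserves $\Lambda_{+}^{n+1}$, and satisfies $\psi=F\circ i$.

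The only non-automatic step, and the natural place for an obstacle, is this last paragraph: one must verify that $\psi$ is a global diffeomorphism onto its image, so that $\sigma$ is well defined (this uses the simple connectedness of $\S^n$ for $n\geq 2$, as in Proposition~\ref{010325A}), and that every isometry of the unit round sphere can be realized by a linear orthochronous Lorentz map commuting with $i$. Once these points are handled, the result is an essentially immediate consequence of Theorem~\ref{1106A}.
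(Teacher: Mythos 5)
Your proposal is correct and follows the same route as the paper up to the decisive point: Proposition~\ref{equivalence2} gives $\nabla^{\perp}\mathbf{H}=0$, Theorem~\ref{1106A} gives $\psi(\S^n)=\S^n(v,1)$ (with $r=1$ forced by the isometry hypothesis), one identifies $i(\S^n)=\S^n(-e_0,1)$, and transitivity of the orthochronous Lorentz group on the past unit hyperboloid produces a rigid motion carrying one model sphere onto the other. The paper's proof stops exactly there, asserting only the set-level equality $F(\S^n(-e_0,1))=\S^n(v,1)$; your final paragraph, which upgrades this to the pointwise identity $\psi=F\circ i$ by noting that $(F_1\circ i)^{-1}\circ\psi$ is an isometry of $(\S^n,g_0)$, hence lies in $O(n+1)$, and then lifting it to $\tilde\sigma=\mathrm{id}\oplus\sigma\in O^{+}(1,n+1)$ commuting with $i$, is an addition the paper leaves implicit. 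That step is sound (the surjectivity of $\psi$ onto $\S^n(v,1)$ comes from Theorem~\ref{1106A}, and injectivity from the fact that compact spacelike submanifolds through the light cone are embedded), and it is genuinely needed to prove the statement as formulated, so your version is the more complete of the two.
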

\begin{proof}
Taking into account Proposition \ref{equivalence2}, we know that the mean curvature vector field of $\psi$ is parallel. We know that $\psi$ must be an embedding and, from Theorem \ref{1106A}, necessarily  $\psi(\S^n)=\S^n(v,1)$ holds, for some $v\in \mathbb{L}^{n+2}$ satisfying $\langle v,v\rangle=-1$ and $v_0<0$. Now observe that $$i(\S^n)=\{y\in \Lambda^{n+1}_{+}\, :\, \langle -e_0,y\rangle = 1\}=\S^n(-e_0,1).$$ Thus, there exists a rigid motion $F$ of $\L^{n+2}$, preserving $\Lambda^{n+1}_{+}$, such that $F(\S^n(-e_0,1))=$ $\S^n(v,1)$.
\end{proof}

Next, as another application of Theorem \ref{1106A}, we will reprove a classical result by Obata. To relate it with Theorem \ref{1106A} it is convenient to recall the spacelike embedding $i_f \colon \S^{n}\to \Lambda^{n+1}_+$, given for each $f\in \mathbb{S}^n$ in (\ref{global_section2}), with induced metric $g_{f}= e^{2f}g_{0}$ on $\S^n$. 

\begin{corollary}\label{Obata} {\rm \cite[Prop. 6.1]{Ob71}} Let $(\S^n,g_{0})$ be an $n(\geq 2)$-dimensional round sphere with radius $1$, and let $g^*$ be another Riemannian metric on $\S^n$ pointwise conformal to $g_{0}$. Then $g^*$ is of constant scalar curvature $n(n-1)$ if and only if it is of constant sectional curvature $1$.
\end{corollary}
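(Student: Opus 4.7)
The plan is to translate Obata's intrinsic statement into the extrinsic framework of Section 4 and then invoke Theorem \ref{1106A} as a black box. Since $g^*$ is pointwise conformal to $g_{0}$, I would write $g^{*}=e^{2f}g_{0}$ for a (unique) $f\in C^{\infty}(\S^{n})$, and consider the spacelike embedding $i_{f}\colon \S^{n}\to \Lambda^{n+1}_{+}\subset \L^{n+2}$, $x\mapsto e^{f(x)}(1,x)$, of (\ref{global_section2}). The induced metric via $i_{f}$ is precisely $g_{f}=e^{2f}g_{0}=g^{*}$ by (\ref{induced_metric}), so $(\S^{n},g^{*})$ is isometric to the spacelike graph $\Sigma^{n}_{f}$ in the light cone.

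For the non-trivial implication, suppose $g^{*}$ has constant scalar curvature $n(n-1)$. Formula (\ref{2805A}) gives $S_{f}=n(n-1)\langle\mathbf{H}_{f},\mathbf{H}_{f}\rangle$ for any spacelike submanifold in $\L^{n+2}$ through $\Lambda^{n+1}_{+}$; therefore $\langle\mathbf{H}_{f},\mathbf{H}_{f}\rangle$ is the constant $1$. By Proposition \ref{equivalence2}, $\mathbf{H}_{f}$ is then parallel in the normal bundle. At this point the heavy lifting has already been done: Theorem \ref{1106A} applies to $i_{f}$ and forces $\Sigma^{n}_{f}=\S^{n}(v,r)$ for some $v\in \L^{n+2}$ with $\langle v,v\rangle=-1$, $v_{0}<0$, and $1/r^{2}=\langle\mathbf{H}_{f},\mathbf{H}_{f}\rangle=1$, so $r=1$. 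As described in Example \ref{examples}, $\S^{n}(v,1)$ is a totally umbilical spacelike sphere of constant sectional curvature $1$; transferring this information back through the isometry $i_{f}$, the metric $g^{*}$ has constant sectional curvature $1$, as required.

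The converse is immediate from the standard identity $S=n(n-1)c$ for any Riemannian metric of constant sectional curvature $c$. No further obstacle is anticipated, since the only potentially hard point, namely the rigidity of compact spacelike submanifolds with parallel mean curvature vector field in the light cone, is precisely the content of Theorem \ref{1106A}. In particular this derivation also makes the conformal factors $f$ explicit through the parametrization (\ref{parametrization}) of $\S^{n}(v,1)$, which is what underlies Corollary \ref{ultimo}.
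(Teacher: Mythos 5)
Your proposal is correct and follows essentially the same route as the paper's own proof: pass to the extrinsic picture via the embedding $i_{f}$, use Proposition \ref{equivalence2} to upgrade constant scalar curvature to parallel mean curvature vector field, and then apply Theorem \ref{1106A} to identify $i_{f}(\S^{n})$ with $\S^{n}(v,r)$ and read off $r=1$ from the normalization $S^{*}=n(n-1)$. The only (harmless) difference is that you make the intermediate step $\langle\mathbf{H}_{f},\mathbf{H}_{f}\rangle=1$ explicit via (\ref{2805A}) and spell out the trivial converse, which the paper leaves implicit.
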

\begin{proof}
If the metric $g^*$ satisfies $g^*=e^{2f}\,g_{0}$, for some $f\in C^{\infty}(\mathbb{S}^n)$. That is, we have $g^*=g_f$. Now, the mean curvature vector field of $i_f$ is parallel because the scalar curvature of $g^*$ is assumed to be constant (Proposition \ref{equivalence2}). According to Theorem \ref{1106A}, we have $i_{f}(\S^{n})=\S^{n}(v,r)$, where $v\in \L^{n+2}$ with $\langle v,v\rangle = -1$, $v_0<0$, and  $r>0$. Finally, our assumption on the value of the scalar curvature  $g^{*}$ implies $r=1$.
\end{proof}

\begin{remark}\label{250225A} {\rm 
We would like to add a comment comparing our (extrinsic) proof in Corollary \ref{Obata} with the (intrinsic) original one by Obata, \cite[Prop. 6.1]{Ob71}. First note that the scalar curvature  $S^*(=S_f)$ of the metric $g^*(=g_f)$ is given by (\ref{10025A}). If we perform the change of variable $h=e^{-f}$, this formula is written as follows  
\begin{equation}\label{remark_Obata}
S^{*}=n(n-1)h^2+2(n-1)h\,\Delta^{0}\, h- n(n-1)\| \nabla^{0} h \|_{0}^2, 
\end{equation}
compare with \cite[Eq. (5.4)]{Ob71}. 
Hence, the condition $S^{*}=n(n-1)$ holds if and only if the function $h(>0)$ satisfies the following differential equation
\begin{equation}\label{10225B}
\Delta^{0} h=\frac{n}{2h}\Big ( \, 1- h^2+\| \nabla^{0} h \|_{0}^2\, \Big),  
\end{equation}
\cite[ Eq. (6.1)]{Ob71}. At this point, Obata showed that the conformal metric $g^{*}$ must be Einstein. Finally, he pointed out that being $g^*$ conformally flat and Einstein, the Riemannian manifold $(\S^{n}, g^{*})$, as a direct consequence of the decomposition of the Riemann curvature tensor, is a space of constant sectional curvature \cite[1.118]{Besse}. Therefore, the metric $g^{*}$ has constant sectional curvature equal to $1$, which completes the remark
}
\end{remark}

It should be noted that the solutions of (\ref{10225B}) are not explicitly provided in \cite{Ob71}. Accordingly, the aim of the final part of this paper is to explicitly determine all solutions of the differential equation (E). In particular, this will yield all positive solutions of (\ref{10225B}).

\vspace{1mm}

\begin{corollary}\label{ultimo} For each positive constant $k$, the only solutions of equation {\rm (E)} are the functions $f: \S^n \rightarrow \R$ given by
\[
f(x)=\log\Big(\,\frac{1/\sqrt{k}}{-v_0+\sum_{i=1}^{n+1}v_ix_i}\,\Big), \quad x\in \S^n,
\]
where $v=(v_0,v_1,...,v_{n+1})\in \L^{n+2}$ with $\langle v,v\rangle = -1$, and $v_{0}<0$. 
\end{corollary}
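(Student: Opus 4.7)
The plan is to derive Corollary \ref{ultimo} as a direct extrinsic consequence of Theorem \ref{1106A}, using the bundle structure of $\pi : \Lambda^{n+1}_+ \to \S^n$ from Section 4 to convert the geometric identification of $\S^n(v,r)$ into an explicit formula for $f$. I would split the argument into a uniqueness direction (from Theorem \ref{1106A}) and a (straightforward) verification that the listed $f$'s indeed solve $(\mathrm{E})$.

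For uniqueness, let $f\in C^{\infty}(\S^n)$ solve $(\mathrm{E})$ for the constant $k>0$. Using the equivalence chain in the Introduction (or equivalently Proposition \ref{equivalence2} combined with formula (\ref{0403A})), this is precisely the condition that the spacelike embedding $i_f(x)=e^{f(x)}(1,x)$ has parallel mean curvature vector field with $\|\mathbf{H}_f\|^2=k$. Applying Theorem \ref{1106A} to $\psi=i_f$, we conclude $i_f(\S^n)=\S^n(v,r)$ for some $v\in\L^{n+2}$ with $\langle v,v\rangle=-1$, $v_0<0$, and $1/r^2=k$, i.e.\ $r=1/\sqrt{k}$.

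The key observation is then that both $i_f$ and the parametrization (\ref{parametrization}) of $\S^n(v,r)$ are global sections of the projection $\pi$ defined in (\ref{projection}) whose images coincide. Because $\pi$ has $1$-dimensional fibres (the positive rays in $\Lambda^{n+1}_+$), a section is uniquely determined by its image, and hence
\[
e^{f(x)}(1,x) \;=\; \frac{r}{\langle v,(1,x)\rangle}(1,x)\;=\;\frac{1/\sqrt{k}}{-v_0+\sum_{i=1}^{n+1}v_ix_i}\,(1,x),
\]
for every $x\in\S^n$. Taking logarithms of the first coordinate yields exactly the claimed formula for $f$. (Note that $-v_0+\sum v_ix_i>0$ for every $x\in\S^n$, since $-v_0>0$ and by Cauchy--Schwarz $|\sum v_ix_i|\le \sqrt{v_0^2-1}<-v_0$, so the logarithm is well defined.)

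For the converse, given $v\in\L^{n+2}$ with $\langle v,v\rangle=-1$ and $v_0<0$, Example \ref{examples} shows that $\S^n(v,1/\sqrt{k})$ is a totally umbilical compact spacelike submanifold contained in $\Lambda^{n+1}_+$ with constant sectional curvature $k$, hence, by (\ref{2805A}), $\|\mathbf{H}\|^2=k$; then Proposition \ref{equivalence2} gives $\nabla^{\perp}\mathbf{H}=0$. Since the parametrization (\ref{parametrization}) of this sphere coincides, as above, with the graph embedding $i_f$ for the $f$ displayed in the statement, formula (\ref{0403A}) (equivalently the equivalence chain of the Introduction) shows that this $f$ satisfies $(\mathrm{E})$. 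The only mildly delicate point, which I would want to articulate carefully, is the \emph{identification of sections} step: Proposition \ref{010325A} only guarantees representation as a graph up to a diffeomorphism of $\S^n$, but here $i_f$ is already presented as a genuine section of $\pi$, so no reparametrization of $\S^n$ intervenes; this is what makes the passage from Theorem \ref{1106A} to the explicit formula immediate.
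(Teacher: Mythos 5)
Your proposal is correct and follows essentially the same route as the paper: interpret a solution $f$ of (E) as the graph embedding $i_f$ with $\|\mathbf{H}_f\|^2=k$, invoke Proposition \ref{equivalence2} and Theorem \ref{1106A} to identify $i_f(\S^n)=\S^n(v,1/\sqrt{k})$, and read off $f$ from the resulting equation $e^{f(x)}\langle v,(1,x)\rangle=1/\sqrt{k}$. The only (harmless) differences are presentational: the paper extracts the formula directly from the defining relation $\langle v,\cdot\rangle=r$ of $\S^n(v,r)$ rather than by matching sections of $\pi$, and it leaves implicit the converse verification and the positivity of $-v_0+\sum v_ix_i$ that you spell out.
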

\begin{proof} For each solution $f$ of equation (E), consider the spacelike embedding $i_f : \mathbb{S}^n \rightarrow \Lambda^{n+1}_+\subset \mathbb{L}^{n+2}$ given by (\ref{global_section2}). The mean curvature vector field $\mathbf{H}_f$ of $i_f$ satisfies $\|\mathbf{H}_f\|^2=k$. Hence, $\nabla^{\perp}\mathbf{H}_f=0$ because of Proposition 
 \ref{equivalence2}. Now, Theorem \ref{1106A} is called to assert that $i_f$ is totally umbilical with $i_f(\mathbb{S}^n)=\mathbb{S}^n(v,1/\sqrt{k})$, where $v\in\mathbb{L}^{n+2}$ such that $\langle v,v\rangle=-1$, $v_0<0$. Therefore, the first coordinate component $(i_f)_0$ of $i_f$ satisfies $(i_f)_0(x)=e^{f(x)}=(1/\sqrt{k})/\big(-v_0+\sum_{i=1}^{n+1}v_ix_i\big)$, for each $x\in\mathbb{S}^n$, and that ends the proof.
\end{proof}

\begin{corollary}\label{equation} 
On the round sphere of constant sectional curvature $1$, $(\S^n,g_0)$, the only smooth positive functions $h : \S^n \rightarrow \R$ that satisfy equation {\rm (\ref{10225B})}
are  given by 
$$ 
h(x)=-v_{0}+\sum_{i=1}^{n+1}v_{i}x_{i}, \quad x\in \S^n,
$$
where $v=(v_0,v_1,...,v_{n+1})\in \L^{n+2}$ with $\langle v,v\rangle = -1$, and $v_{0}<0$. 
\end{corollary}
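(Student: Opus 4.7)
The plan is to reduce equation (\ref{10225B}) to equation (E) with $k=1$ via the substitution $h=e^{-f}$, and then invoke Corollary \ref{ultimo} directly. This is the same change of variable already used in Remark \ref{250225A} to translate between the intrinsic and extrinsic formulations, so really the corollary is a dictionary translation once the previous uniqueness result is in hand.

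First I would set $f := -\log h$, which is smooth on $\S^n$ since $h$ is smooth and strictly positive by hypothesis. A routine computation yields $\nabla^0 h = -h\,\nabla^0 f$, $\|\nabla^0 h\|_0^2 = h^2 \|\nabla^0 f\|_0^2$, and $\Delta^0 h = h\bigl(\|\nabla^0 f\|_0^2 - \Delta^0 f\bigr)$. Substituting into (\ref{10225B}) and clearing the factor of $h>0$ gives
\[
2\Delta^0 f + (n-2)\|\nabla^0 f\|_0^2 = n\bigl(1 - e^{2f}\bigr),
\]
which is precisely equation (E) with $k=1$. This is consistent with Remark \ref{250225A}: formula (\ref{remark_Obata}) shows that (\ref{10225B}) corresponds to the condition $S^{*}=n(n-1)$, i.e.\ $k=1$ in the language of Section 4. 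Conversely, every solution $f$ of (E) with $k=1$ produces a positive solution $h=e^{-f}$ of (\ref{10225B}), so the correspondence is a bijection.

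Next I would apply Corollary \ref{ultimo} with $k=1$: every solution of (E) is of the form $f(x)=\log\bigl(1/(-v_0+\sum_{i=1}^{n+1}v_i x_i)\bigr)$ for some $v\in\L^{n+2}$ with $\langle v,v\rangle=-1$ and $v_0<0$. Exponentiating gives $h(x)=e^{-f(x)}=-v_0+\sum_{i=1}^{n+1}v_i x_i$, which is the announced formula. The only point that deserves a brief sanity check, rather than being a real obstacle, is that this formula really defines a smooth positive function on $\S^n$: since $\langle v,v\rangle=-1$ forces $v_0^2 = 1 + \sum_{i=1}^{n+1}v_i^2$, the Cauchy--Schwarz inequality gives, for any $x\in\S^n$,
\[
\Bigl|\sum_{i=1}^{n+1}v_i x_i\Bigr| \leq \sqrt{\sum_{i=1}^{n+1}v_i^2} < |v_0| = -v_0,
\]
so $h(x)>0$ everywhere, confirming that Corollary \ref{ultimo} loses nothing in the translation back to $h$ and that every $v$ in the prescribed family produces an admissible solution.
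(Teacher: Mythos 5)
Your proof is correct and follows essentially the same route as the paper, which simply invokes Corollary \ref{ultimo} with $k=1$ under the substitution $f=\log(1/h)$. Your explicit verification of the change of variables and the positivity check via Cauchy--Schwarz are accurate but amount to spelling out details the paper leaves implicit.
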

The proof follows from Corollary \ref{ultimo}, taking $k=1$ and $f=\log (1/h)$ for each given solution $h$ of equation (\ref{10225B}).

\vspace{2mm}

\noindent {\bf Availability of data and materials:} No data or materials are associated with this study.

\noindent 

\vspace*{3mm}


\begin{thebibliography}{999}
\bibitem{ACR} L.J. Al\'{\i}as, V.L. C\'anovas and M. Rigoli, Codimension two spacelike submanifolds of the Lorentz-Minkowski spacetime into the light cone, {\it P. Roy. Soc.  Edin.  A Mat.} {\bf 149} (2019), 1523--1553.

\bibitem{AER} L.J. Al\'{\i}as, F.J.M. Estudillo and A. Romero, Spacelike submanifolds with parallel mean curvature in
pseudo-Riemannian space forms, {\it Tsukuba J. Math.}, {\bf 21} (1997), 169--179.

\bibitem{AD} A. Asperti and M. Dajczer, Conformally flat Riemannian manifolds as hypersurfaces of the light cone, {\it Can. Math. Bull.}, {\bf 32} (1989), 281--285.

\bibitem{Avez} A. Avez,  Applications de la formule de Gauss-Bonnet-Chern aux vari\'et\'es \`a quatre dimensions, {\it C. R. Acad. Sci. Paris}, {\bf 256} (1963), 5488--5490.



\bibitem{Besse} A. Besse, {\it Einstein manifolds}, Springer-Verlag, Berlin, Heidelberg, 1987.

\bibitem{Bri} W.H. Brinkmann, On Riemannian spaces conformal to Euclidean space, {\it Proc. Nat. Acad. Sci. USA}, {\bf 9} (1923), 1--3

\bibitem{CFP} V.L. Cánovas, D. de la Fuente, and
F.J. Palomo, Note on spacelike submanifolds through light cones in Lorentzian space forms, {\it Results Math.} {\bf 76} (2021), 15 (14 pp).

\bibitem{CS09} A. \v{C}ap and J. Slov\'{a}k, {\it Parabolic Geometries I. Background and General Theory}, Mathematical Surveys and Monographs \textbf{154}, AMS, 2009.

\bibitem{Chen} B.Y. Chen, On the surfaces with parallel mean curvature vector, {\it Indiana Univ. Math. Soc.}, {\bf 22} (1972), 655--666.



\bibitem{Chen1} B.Y. Chen, Classification of spatial surfaces with parallel mean curvature vector in pseudo-Euclidean spaces of arbitrary dimension, {\it J. Math. Phys.}, {\bf 50} (2009), 043503(1-14).

\bibitem{Cheng} Q.M. Cheng, Complete spacelike submanifolds in a De Sitter space with parallel mean curvature vector, {\it Math. Z.} {\bf 206} (1991), 333--339.

\bibitem{Da} M. Dajczer, {\it Submanifolds and isometric immersions}, Math. Lectures Series {\bf 13}, Publish or Perish, Houston, 1990.

\bibitem{DoCarmo} M.P. Do Carmo, {\it Differential Geometry of Curves and Surfaces}, Revised and updated sec. ed., Dover Publ., 2016.

\bibitem{Harris} S.G. Harris, Closed and complete spacelike hypersurfaces in Minkowski space, {\it Class. Quantum Grav.} {\bf 5} (1988), 111--119.

\bibitem{Hoffman} D.A. Hoffman, Surfaces of constant mean curvature in manifolds of constant curvature, {\it J. Diff. Geom.} {\bf 8} (1973), 161--176.


\bibitem{IPR} S. Izumiya, D. Pei and M.C. Romero-Fuster, Umbilicity of space-like submanifolds of Minkowski space, {\it Proc. Roy. Soc. Edin. A}, {\bf 134} (2004), 375--387.



\bibitem{Lee-Parker} J.M. Lee and T.H. Parker, The Yamabe problem, {\it Bull. Amer. Math. Soc.} {\bf 17} (1987), 37--91.



\bibitem{Mag} M.A. Magid, Isometric immersions of Lorentz space with parallel
second fundamental forms, {\it Tsukuba J. Math.}, {\bf 8} (1984), 31--54.



\bibitem{MS03} M. Mars and J.M.M. Senovilla,  Trapped surfaces and symmetries,  {\it Class. Quant. Grav. } {\bf 20}(2003), 293--300.

\bibitem{Ob71} M. Obata, The conjectures on conformal transformations of Riemannian manifolds, {\it J. Diff. Geom.} {\bf 6} (1971), 247--258. 

\bibitem{PPR} O. Palmas, F.J. Palomo and A. Romero, On the total mean curvature of a compact space-like submanifold in Lorentz–Minkowski spacetime, {\it P. Roy. Soc.  Edin.  A Mat.} {\bf 148} (2018), 199--210. 

\bibitem{Palmer} B. Palmer, Variational problems which are quadratic in the surface curvatures, {\it Curvature and Variational Modeling in Physics and Biophysics}, Ed. O.J. Garay, E. Garc\'{\i}a-Rio and R. V\'azquez-Lorenzo, {\it AIP Proc.} {\bf 1002} (2008), 33--70.  

\bibitem{PaRo} F.J. Palomo and A. Romero, On spacelike surfaces in $4$-dimensional Lorentz-Minkowski spacetime through a light cone, {\it P. Roy. Soc. Edinb. A Mat.}, {\bf 143A}(2013), 881--892.

\bibitem{Santos} W. Santos, Submanifolds with parallel mean curvature vector in spheres, {\it T\^ohoku Math. J.} {\bf 46} (1994), 403--415.

\bibitem{Wente} H.C. Wente, Counterexample to a conjecture of H. Hopf, {\it Pac. Math J.} {\bf 131} (1986), 193--243.



\bibitem{Y} S.-T. Yau, Submanifolds with constant mean curvature. {\it I. Amer. J. Math.} {\bf 96} (1974), 346--366.
\end{thebibliography}
\end{document}